\numberwithin{figure}{section}
\newtheorem{theorem}{Theorem}[section]
\newtheorem{lemma}[theorem]{Lemma}
\theoremstyle{definition}
\newtheorem{definition}[theorem]{Definition}
\theoremstyle{remark}
\newtheorem{remark}[theorem]{Remark}
\DeclareMathOperator{\charac}{char}
\DeclareMathOperator{\disc}{disc}
\DeclareMathOperator{\GL}{GL}
\DeclareMathOperator{\ff}{Frac}
\DeclareMathOperator{\tr}{Tr}
\DeclareMathOperator{\res}{res}
\DeclareMathOperator{\red}{red}
\newcommand{\ZZ}{\mathbb{Z}}
\newcommand{\QQ}{\mathbb{Q}}
\newcommand{\FF}{\mathbb{F}}
\renewcommand{\AA}{\mathbb{A}}
\newcommand{\PP}{\mathbb{P}}
\title{Lifting low-gonal curves for use in Tuitman's algorithm}
\author{Wouter Castryck and Floris Vermeulen}
\begin{document}

\maketitle

\begin{abstract} 
Consider a smooth projective curve $\overline{C}$ over a finite field $\FF_q$, equipped with a simply branched morphism $\overline{C} \to \PP^1$ of degree $d \leq 5$. Assume $\charac \FF_q > 2$ if $d \leq 4$, and $\charac \FF_q > 3$ if $d=5$. In this paper we describe how to efficiently compute a lift of $\overline{C}$ to characteristic zero, such that it can be fed as input to Tuitman's algorithm for computing the Hasse--Weil zeta function of $\overline{C} / \FF_q$. Our method relies on the parametrizations of low rank rings due to Delone--Faddeev and Bhargava.
\end{abstract}

\section{Introduction}

\thispagestyle{empty}

About $20$ years ago, Kedlaya published an influential paper~\cite{kedlaya}, showing how one can employ Monsky--Washnitzer cohomology to efficiently compute Hasse--Weil zeta functions of hyperelliptic curves over finite fields having small odd characteristic. %\footnote{While his attention was limited to odd characteristic finite fields and curves carrying a rational Weierstrass point, these gaps were taken care of in~\cite{denefvercauteren,harrison}.} 
Its many follow-up works include several generalizations to geometrically larger classes of curves, first to superelliptic curves~\cite{gaudrygurel}, then to $C_{ab}$ curves~\cite{denefvercauteren} and then further to non-degenerate curves~\cite{castryckdenefvercauteren}, i.e., smooth curves in toric surfaces. A more significant step was taken in 2016, when Tuit\-man~\cite{tuitman1,tuitman2} published a Kedlaya-style algorithm that potentially covers arbitrary curves, and at the same time beats the methods from~\cite{castryckdenefvercauteren,denefvercauteren} in terms of efficiency.
Unfortunately, the user of Tuitman's algorithm is expected to provide a lift of the input curve to characteristic zero that meets the technical requirements from~\cite[Ass.\,1]{tuitman2}. Beyond non-degenerate curves, this is a non-trivial task.
%This is easy in the non-degenerate case, as explained below, but it is far from trivial in general. 
As a result, the exact range of applicability of Tuitman's method remains unclear. 
%In fact, up till now, his ideas have proven more fruitful in the Chabauty--Coleman and Chabauty--Kim methods for finding all rational points on certain curves over $\QQ$, see e.g.~\cite{cursedcurve}.

 A partial approach to lifting curves having gonality at most four was sketched in~\cite{castrycktuitman}, with concrete details being limited to curves of genus five. 
 In the current paper we present a different method, which is faster, works for curves of gonality at most five, and is much easier to implement.
 Concretely, we assume that we are given an absolutely irreducible curve over a finite field $\FF_q$ of characteristic $p > 2$, defined by a polynomial of the form
 \begin{equation} \label{eq:inputpol} \overline{f}_d(x)y^d + \overline{f}_{d-1}(x)y^{d-1} + \ldots + \overline{f}_0(x) \in \FF_q[x,y]
\end{equation}
for some $d \leq 5$. Moreover, the morphism $\overline{\varphi}$ from its non-singular projective model $\overline{C}$ to the projective line, induced by $(x,y) \mapsto x$, is assumed to be simply branched of degree $d$; in other words, all fibers of $\overline{\varphi}$ should consist of either $d-1$ or $d$ geometric points. Finally, if $d=5$ then it is assumed that $p>3$. Then our method efficiently produces a lift satisfying the main requirement from~\cite[Ass.\,1]{tuitman2}, which therefore can be fed as input to Tuitman's algorithm, modulo Heuristic H discussed below. 
%More precisely, we present the following result:

In terms of moduli, the locus of genus $g$ curves admitting a simply branched morphism to $\PP^1$ of degree at most $5$ has dimension $\min\{2g + 5, 3g-3\}$ by a result of Segre~\cite{segre}. For $g = 6$ and $g \geq 8$ this exceeds the locus of non-degenerate curves (and hence the locus of curves for which point counting was previously feasible) by four dimensions, see~\cite{CV}. In particular, our lifting procedure applies to all sufficiently general curves of genus $g \leq 8$.

\begin{remark}
Expecting our curve to be given in the form~\eqref{eq:inputpol} is essentially equivalent to assuming \emph{knowledge} of an $\FF_q$-rational  degree $d$ morphism $\overline{C} \to \PP^1$ that is simply branched, in contrast with the assumptions from~\cite{castrycktuitman}.
%, but
%for most practical applications this seems not much of a restriction. 
%\todo{Referee M zegt hier iets over superelliptic curves...}. 
%(Ik denk dat hij eerder verwijst naar de strengheid van de voorwaarde van het simply branched zijn, maar daar gaat het hier niet over; ik heb simply branched tussen haakjes gezet)
If such a morphism to $\PP^1$ exists but is not known, then one can try %\footnote{The algorithm from~\cite{schichoschreyerweimann} was designed to work in characteristic $0$, and there are some issues when naively running it in finite characteristic, see e.g.~\cite[Rem.\,25]{castrycktuitman}; it is not clear to us how fundamental these are.} 
to resort to methods due to Schicho--Schreyer--Weimann~\cite{schichoschreyerweimann} or Derickx~\cite[\S2.3]{derickx} for finding one.
\end{remark}

\subsection*{Lifting strategy.} % Let $g$ be the genus of $\overline{C}$. 
Write $q = p^n$ and fix a degree $n$ number field $K$ in which $p$ is inert. Let $\mathcal{O}_K$ denote its ring of integers and identify $\FF_q$ with $\mathcal{O}_K / (p)$. To \emph{lift} the curve $\overline{C}$ means to produce a non-singular projective curve $C / K$ whose reduction mod $p$ is isomorphic to $\overline{C} / \FF_q$; necessarily, the genus of $C$ should be equal to that of $\overline{C}$. Our actual goal is to lift the morphism $\overline{\varphi}$, which means that we want to equip $C$ with a morphism $\varphi : C \to \PP^1$ reducing to $\overline{\varphi} : \overline{C} \to \PP^1$ mod $p$, up to isomorphism.
%Our goal is to \emph{lift} the morphism $\overline{\varphi} : \overline{C} \to \mathbb{P}^1$, which means to produce a non-singular projective curve $C / K$ having good reduction modulo $p$, together with a $K$-rational morphism $\varphi : C \to \PP^1$ , whose reduction modulo $p$ is birational to $\overline{C}$. 
Our approach to solving this problem is based on the parametrization of low rank rings by Delone and Faddeev~\cite[Prop.\,4.2]{gangrosssavin}, and Bhargava~\cite{bhargava4, bhargava5}, in combination with algorithms due to Hess for computing reduced bases~\cite{hess}. In doing so, we will find concrete, typically non-planar equations for $\overline{C}$ over $\FF_q$ that have ``free coefficients'', which can be lifted to $\mathcal{O}_K$ naively,\footnote{Lifting $\overline{a} \in \FF_q \setminus \{0\}$ \emph{naively} to $\mathcal{O}_K$ means: producing whatever element $a \in \mathcal{O}_K$ such that $a \bmod p = \overline{a}$.} in order to obtain a non-singular projective curve $C / K$ along with a morphism $\varphi : C \to \PP^1$ of the said kind. We refer to Section~\ref{sec:preliminaries} for a more elaborate discussion.

\begin{remark}
  In general, the polynomial~\eqref{eq:inputpol}, which defines a plane curve that is birationally equivalent with $\overline{C}$, is not liftable directly: there may be many singularities, which typically disappear when lifting the coefficients of~\eqref{eq:inputpol} naively to $\mathcal{O}_K$, causing an increase of the genus.
\end{remark}

\begin{remark}
  In Kedlaya's original algorithm, corresponding to the case $d = 2$, an implicit first step is to rewrite~\eqref{eq:inputpol} into Weierstrass form. Indeed, Weierstrass models have ``free coefficients" that can be lifted naively to $\mathcal{O}_K$, always resulting in a hyperelliptic curve over $K$ having the same genus. From now on we assume $d \geq 3$.
\end{remark}

%Our approach to solving the lifting problem is to mimic Schreyer's proof~\cite[Cor.\,6.8]{schreyer} of the unirationality of $\mathcal{H}_{g,d}$, the Hurwitz space of simply branched degree $d \leq 5$ covers of $\PP^1$ by curves of genus $g$. 
%In doing so, we will find concrete, typically non-planar equations for $\overline{C}$ over $\FF_q$ that have ``free coefficients'', which can be lifted to $\mathcal{O}_K$ naively,\footnote{Lifting $\overline{a} \in \FF_q \setminus \{0\}$ \emph{naively} to $\mathcal{O}_K$ means: producing whatever element $a \in \mathcal{O}_K$ such that $a \bmod p = \overline{a}$.} in order to obtain a non-singular projective curve $C / K$. This procedure also lifts the morphism $\overline{\varphi}$ to a morphism $\varphi : C \to \PP^1$. 
%whose set of branch points reduces modulo $p$ onto the set of branch points of $\overline{\varphi}$.
%As we will see, Schreyer's proof can be made effective using the  Delone--Faddeev~\cite[Prop.\,4.2]{gangrosssavin} and Bhargava~\cite{bhargava4,bhargava5} correspondences for cubic resp.\ quartic and quintic rings, in combination with algorithms due to Hess for computing reduced bases~\cite{hess}.

Through elimination of variables (i.e., projection) we then obtain a planar model of the form
$f_d(x)y^d + f_{d-1}(x)y^{d-1} + \ldots + f_0(x) = 0$,
for polynomials $f_i \in \mathcal{O}_K[x]$ which, in general, do not reduce to $\overline{f}_i$ mod $p$; here, the lifted morphism $\varphi$ again corresponds to $(x,y) \mapsto x$. The change of variables $y \leftarrow y / f_d(x)$ yields a monic defining equation 
 \begin{equation} \label{eq:liftedpol} Q(x,y) = y^d + f_{d-1}(x)y^{d-1} + \ldots + f_0(x) f_d(x)^{d-1},
\end{equation}
having the right shape to serve as input for Tuitman's algorithm. 
 All subsequent arithmetic in Tuitman's algorithm is done in the $p$-adic completion $\ZZ_q$ of $\mathcal{O}_K$ (or rather its fraction field $\QQ_q$), up to some finite $p$-adic precision. But for the lifting step it suffices to work over $\mathcal{O}_K$, and this has some implementation-technical advantages~\cite[Rmk.\,2]{castrycktuitman}. 

\subsection*{On Tuitman's assumption.} Let us discuss the specific requirements from \cite[Ass.\,1]{tuitman2} in more detail.
A first assumption concerns the polynomial $r(x) = \Delta/ \gcd(\Delta, d\Delta / dx)$ with $\Delta$ the discriminant of~\eqref{eq:liftedpol}, when viewed as a polynomial in $y$ over $\mathcal{O}_K[x]$:
\begin{itemize}
    \item[(a)] the discriminant of $r(x)$ is a unit in $\ZZ_q$.
\end{itemize}
Next, consider the ring $\mathcal{R} = \ZZ_q[x, 1/r, y]/(Q)$ and write $\QQ_q(x,y)$ for the field of fractions of $\mathcal{R} \otimes \QQ_q$ and $\FF_q(x,y)$ for the field of fractions of $\mathcal{R} \otimes \FF_q$.
A second assumption is that we know explicit matrices
\[ W_0 \in \GL_d(\ZZ_q[x, 1/r]) \qquad \text{and} \qquad W_\infty \in \GL_d(\ZZ_q[x^{\pm 1}, 1/r]) \]
such that, if we write $b_{j,0} = \sum_{i = 0}^{d-1} (W_0)_{i+1, j+1} y^i$ and $b_{j, \infty} = \sum_{i=0}^{d-1} (W_\infty)_{i+1, j+1} y^i$, then: %\todo{Referee M vraagt om integral basis te definieren}
\begin{itemize}
    \item[(b)] $\{ b_{0,0}, \ldots, b_{d-1,0} \}$ is an integral basis for $\QQ_q(x,y)$ over $\QQ_q[x]$ and its reduction mod $p$ is  an integral basis for $\FF_q(x,y)$ over $\FF_q[x]$,
    \item[(c)] $\{ b_{0,\infty}, \ldots, b_{d-1, \infty} \}$ is an integral basis for $\QQ_q(x,y)$ over $\QQ_q[x^{-1}]$ and its reduction mod $p$ is  an integral basis for $\FF_q(x,y)$ over $\FF_q[x^{-1}]$.
\end{itemize}
Finally, writing 
\[ \mathcal{R}_0 = \ZZ_q[x] b_{0,0} + \ldots + \ZZ_q[x] b_{d-1,0} \quad \text{and} \quad \mathcal{R}_\infty = \ZZ_q[x^{-1}] b_{0,\infty} + \ldots + \ZZ_q[x^{-1}] b_{d-1,\infty}, \] it is assumed that
\begin{enumerate}
    \item[(d)] the discriminants of the finite $\ZZ_q$-algebras $\left(\mathcal{R}_0/(r) \right)_{\red}$ and $\left(\mathcal{R}_\infty/(1/x)\right)_{\red}$ are units.
\end{enumerate}
 Here the subscript `red' means that we consider the reduced ring obtained by quotienting out the nilradical.\footnote{This takes into account the erratum pointed out in~\url{https://jtuitman.github.io/erratum.pdf}.}
 
 The geometric meaning of assumptions (a) and (d) is discussed in~\cite[Prop.\,2.3]{tuitman2}; see also~\cite[Rmk.\,2.3]{tuitman1}. They express that all branch points of $\varphi : C \to \PP^1$, as well as all points lying over these branch points, should be distinct mod $p$. In our context, these properties are automatic. Indeed, since $p > 2$ and
$\overline{\varphi}: \overline{C}\to \PP^1$ is simply branched, there is no wild ramification, hence the ramification divisor  
of $\varphi$ reduces mod $p$ to that of $\overline{\varphi}$. Thus, again because $\overline{\varphi}$ is simply branched, we see that the ramification points of $\varphi$ must reduce to $2g + 2d - 2$ distinct points  that take distinct images under $\overline{\varphi}$, as wanted; here $g$ denotes the genus of $\overline{C}$. We also see that $\varphi$ is simply branched as well.

 Assumptions (b) and (c), on the other hand, ask for an explicit description of our lift $\varphi : C \to \PP^1$ in terms of two affine patches $\varphi^{-1}(\PP^1 \setminus \{ \infty\})$ and $\varphi^{-1}(\PP^1 \setminus \{0\})$, glued together using $W = W_0^{-1}W_\infty$, that is compatible with reduction mod $p$. In Tuitman's own \texttt{pcc\_{}p} and \texttt{pcc\_{}q} code,\footnote{\texttt{https://github.com/jtuitman/pcc}, see \texttt{mat\_{}W0()} and \texttt{mat\_{}Winf()} in \texttt{coho\_{}p.m} and \texttt{coho\_{}q.m}.} the matrices $W_0$ and $W_\infty$ are found by computing integral bases for the function field extension $K(x) \subseteq K(C)$ defined by~\eqref{eq:liftedpol}, using the Magma intrinsic \texttt{MaximalOrderFinite()}, and hoping that these have good reduction mod $p$. There is a non-zero probability that this approach fails, in which case Tuitman's code outputs ``bad model for curve", but in practice this probability become negligible very rapidly as $q$ grows; see the tables in~\cite{castrycktuitman}. We therefore content ourselves with relying on the same bet, which we call Heuristic H:

\begin{definition}[informal]
The output~\eqref{eq:liftedpol} satisfies \emph{Heuristic H} if the associated integral bases of $K(C)$ over $K[x]$ and $K[x^{- 1}]$, computed using Magma as in Tuitman's implementation, meet the requirements from~\cite[Ass.\,1]{tuitman2}.
\end{definition}

Of course, if through some other method one 
manages to find integral bases with good reduction, then this would by-pass Heuristic H. In particular, if $d = 3$ then, as explained in Remark~\ref{goodbasis_d3}, such integral bases can be extracted as by-products of our lifting procedure. %It seems likely that the same remark applies to $d=4,5$, but extracting integral bases seems more technical in these cases.

\subsection*{Combined runtime.} 
The running time of our lifting procedure is strongly dominated by that of Tuitman's algorithm, as should be clear from the discussions in Sections~\ref{sec:trigonal},~\ref{sec:tetragonal} and~\ref{sec:pentagonal} below. We will therefore omit a detailed analysis, although it is crucial to note that lifting does not inflate the input size too badly. Concretely, if we let $\delta = \max_{0 \leq i \leq d} \deg \overline{f}_i$, then
\begin{itemize}
    \item the reader can check that all $f_i$'s are of degree $O(g)$, which in turn is $O(\delta)$ thanks to Baker's bound~\cite[Thm.\,2.4]{beelen},
    \item when lifting coefficients from $\FF_q$ to $\mathcal{O}_K$ naively, we can choose them to be of bit size $O(n \log q)$, and as a result the same asymptotic estimate applies to the size of the coefficients of the $f_i$'s,
    \item as discussed in~\cite[p.\,313-314]{tuitman2}, the matrices $W_0,W_\infty$ produced by the Magma intrinsic, as well as their inverses, involve $K(x)$-coefficients whose pole orders are in $O(\delta)$, as required by~\cite[Ass.\,2]{tuitman2}; for $d = 3$, the reader can check that the same bound applies to the integral bases from Remark~\ref{goodbasis_d3}. 
\end{itemize}
From~\cite[Thm.\,4.10]{tuitman2} it follows that  $\widetilde{O}(p \delta^4 n^3)$
bit operations suffice for computing the Hasse--Weil zeta function of any curve $\overline{C} / \FF_q$ of the form~\eqref{eq:inputpol}, where we recall our dependence on Heuristic H if $d = 4, 5$.

\subsection*{Practical performance.} This paper comes with an implementation of our lifting procedure in Magma~\cite{magma}, which can be found at~\url{https://homes.esat.kuleuven.be/~wcastryc/}. Appendix~\ref{app:report} reports on how the code performs in combination with Tuit\-man's implementation for computing Hasse--Weil zeta functions. As discussed there, this gives satisfactory results for $d = 3$ and $d=4$, leading to a substantial enlargement of the class of curves admitting fast computation of their zeta function (over finite fields with small odd characteristic). In degree $d=5$ the combined code is considerably slower. This is almost entirely due to the seemingly harmless ``elimination of variables" step, which is needed to put the lifted curve $C / K$ in the form~\eqref{eq:liftedpol} and which produces large hidden constants in the above $O(g)$ and $O(n \log q)$ estimates. %Currently, for $d=5$, one should mainly view Theorem~\ref{thm:maintheorem} as a theoretical contribution, although 
Nevertheless, here too, it is practically feasible to compute zeta functions in a non-trivial range.

%\subsection*{Magma code.} We have completely implemented our algorithms in Magma to solve the lifting problem for low-gonal curves. Concrete magma code, together with several examples can be found in the file accompanying this paper. 

%Our Magma code computing the liftable matrix $\overline{M}$ is rather page-consuming, hence we immediately refer to the file \texttt{lifting\_{}lowgonal\_{}5.m} accompanying this paper, which is discussed in more detail in the next section. It is worth pointing out that, when naively running the code in characteristic $3$, the output is trivial, i.e.\ it returns the zero matrix.

\subsection*{Tracks for future work.}
Besides mitigating the effect of variable elimination and getting rid of Heuristic H, a challenging goal is to dispose of the conditions on $p$ and of the condition that $\overline{\varphi}$ is simply branched. This seems to require changes to Tuitman's algorithm that are similar to how Denef and Vercauteren managed to make Kedlaya's algorithm work in even characteristic~\cite{denefvercauterenchar2}.
Also, as explained in Section~\ref{sec:preliminaries}, our naive lifting strategy using ``free coefficients" is closely related to Schreyer's proof~\cite[Cor.\,6.8]{schreyer} of the unirationality of $\mathcal{H}_{g,d}$, the moduli space of simply branched degree $d$ covers of $\PP^1$ by curves of genus $g$, for $d \leq 5$. Such unirationality results are known to be false for $d\geq 7$, where there is no hope for our strategy to work. This leaves $d=6$ as an interesting open case, on which several partial (positive) results have been proved by Geiss~\cite{geiss}, see~\cite[Fig.\,1]{schreyertanturri} for an overview. It seems worth investigating how Geiss' results combine with our approach.

\subsection*{Acknowledgements.} We thank Jan Tuitman and Yongqiang Zhao for several inspiring conversations, and the anonymous reviewers for their many helpful comments. This work is supported by CyberSecurity Research Flanders with reference VR20192203 and by  KU Leuven with references C14/17/083 and C14/18/067.

\section{Preliminaries} \label{sec:preliminaries}

\subsection*{Reduced bases and Maroni invariants}
Let $k$ be any field, which in the next sections will be specialized to $k = \FF_q$ and/or $k = K$.
 Consider a non-singular projective curve $C/k$ of genus $g$, along with a $k$-rational degree $d$ morphism $\varphi : C \to \PP^1$. Consider the inclusion of function fields $k(x) \subseteq k(C)$ corresponding to $\varphi$. Let $k[C]_0$, resp.\ $k[C]_\infty$, denote the integral closure of $k[x]$, resp.\ $k[1/x]$, inside $k(C)$. 

\begin{theorem} \label{thm:hessbasis}
There exist unique negative integers $r_1 \geq r_2 \geq \ldots \geq r_{d-1}$ for which there is a basis 
$1, \alpha_1, \ldots, \alpha_{d-1}$
of $k[C]_0$ over $k[x]$ such that 
$1, x^{r_1}\alpha_1, \ldots, x^{r_{d-1}}\alpha_{d-1}$
is a basis of $k[C]_\infty$ over $k[1/x]$.
\end{theorem}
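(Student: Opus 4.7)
The plan is to deduce the theorem from Grothendieck's splitting theorem for vector bundles on $\PP^1$, which provides both existence and uniqueness of the invariants $r_i$ essentially for free.

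First, I would observe that the pushforward $\mathcal{F} := \varphi_* \mathcal{O}_C$ is a locally free $\mathcal{O}_{\PP^1}$-module of rank $d$. This uses that $\varphi$ is finite (as a non-constant morphism between smooth projective curves) and flat (since $\PP^1$ is a smooth curve and $C$ is Cohen--Macaulay). By Grothendieck's theorem, $\mathcal{F}$ splits uniquely, up to reordering of summands, as
\[
\mathcal{F} \;\cong\; \bigoplus_{i=0}^{d-1} \mathcal{O}_{\PP^1}(e_i), \qquad e_0 \geq e_1 \geq \cdots \geq e_{d-1}.
\]

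Next, I would pin down the values $e_i$. Global sections give $H^0(\PP^1, \mathcal{F}) = H^0(C, \mathcal{O}_C) = k$, which is $1$-dimensional because $C$ is non-singular, projective and geometrically integral. Since $\dim H^0(\PP^1, \mathcal{F}) = \sum_i \max(0, e_i+1)$, this forces $e_0 = 0$ and $e_i \leq -1$ for $1 \leq i \leq d-1$. Setting $r_i := e_i$ for $i \geq 1$ gives the desired negative integers $r_1 \geq r_2 \geq \cdots \geq r_{d-1}$, and their uniqueness follows from Grothendieck's uniqueness statement.

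For the basis, I would trivialize each summand on the two standard charts of $\PP^1$. On $U_0 = \spec k[x]$ every $\mathcal{O}_{\PP^1}(e_i)$ is free of rank $1$; pick a generator, taking the canonical generator $1$ in the $i=0$ summand (coming from the unit $\mathcal{O}_{\PP^1} \hookrightarrow \mathcal{F}$), and calling the others $\alpha_1, \ldots, \alpha_{d-1}$. Then $1, \alpha_1, \ldots, \alpha_{d-1}$ is a $k[x]$-basis of $H^0(U_0, \mathcal{F}) = k[C]_0$. On $U_\infty = \spec k[1/x]$, the standard transition function for $\mathcal{O}_{\PP^1}(e_i)$ shows that $x^{e_i}$ times the $U_0$-generator yields a $U_\infty$-generator. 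Hence $1, x^{r_1}\alpha_1, \ldots, x^{r_{d-1}}\alpha_{d-1}$ is a $k[1/x]$-basis of $H^0(U_\infty, \mathcal{F}) = k[C]_\infty$, as required.

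The main subtlety to check carefully is the sign/normalization convention for the transition functions of $\mathcal{O}_{\PP^1}(n)$, so that the negative $r_i$'s end up on the correct side; this is just bookkeeping. A minor point also worth recording is why $H^0(C, \mathcal{O}_C) = k$ suffices to conclude $e_i < 0$ strictly for $i \geq 1$ (any $e_i = 0$ would contribute an extra global section), and why the basis choices in the two charts can be made compatibly (they automatically are, since they arise from the \emph{same} global direct-sum decomposition of $\mathcal{F}$).
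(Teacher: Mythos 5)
Your argument is correct, but it is not the route the paper actually cites. The paper refers the reader to Hess~\cite{hess} for a proof; Hess works in the language of lattices over $k[x]$ and their ``reduced bases'' with respect to the infinite place, and establishes existence and uniqueness of the $r_i$'s by an essentially matrix-theoretic reduction procedure (this is what underlies \texttt{ShortBasis()} and is therefore the computationally relevant viewpoint). What you do instead is exactly the ``more geometric language'' the paper alludes to in the remark immediately following the statement: identify $\varphi_*\mathcal{O}_C$ as a rank-$d$ vector bundle on $\PP^1$, invoke Grothendieck's splitting theorem for existence and uniqueness of the splitting type, pin down $e_0=0$ and $e_i\le -1$ for $i\ge 1$ via $h^0(\varphi_*\mathcal{O}_C)=h^0(\mathcal{O}_C)=1$, and read off the two bases from the two standard trivializations of the line-bundle summands. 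Your approach buys a very clean and conceptually transparent proof of existence and uniqueness, at the cost of being non-constructive (it does not by itself give the algorithm the paper needs). One small point you gesture at but should spell out: to take ``the canonical generator $1$ in the $i=0$ summand coming from the unit $\mathcal{O}_{\PP^1}\hookrightarrow\mathcal{F}$'' you need to know the unit embedding can be arranged to \emph{be} the degree-$0$ summand. This follows because the unit section is, up to scalar, the unique nonzero global section, and under any Grothendieck splitting all global sections live in the $\mathcal{O}_{\PP^1}(e_0)=\mathcal{O}_{\PP^1}$ factor (the others being negative); so the unit map factors as a scalar times the inclusion of that summand, and the splitting may be adjusted accordingly.
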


See~\cite{hess} for a proof; it is standard to call $e_i = - r_i - 2$ the \emph{Maroni invariants} 
%(or \emph{scrollar invariants}) 
of $C$ with respect to $\varphi$ (e.g., if $\varphi$ is a degree $2$ cover, then there is just one Maroni invariant, namely $g-1$). A corresponding basis $1, \alpha_1, \ldots, \alpha_{d-1}$ is called a \emph{reduced basis}. 
In our cases of interest, the integers $r_i$ and an accompanying reduced basis can be computed efficiently: if $k$ is a finite field or a number field, then the Magma command \texttt{ShortBasis()} takes care of this.
%Thus, in our degree $d=2$ example from the introduction, $1,y$ is a reduced $\FF_q[x]$-basis of $\FF_q[\overline{C}]_0$, and $g-1$ is the (single) Maroni invariant.

\begin{remark}
In more geometric language, the integers $r_i$ are characterized by the sheaf decomposition
$\varphi_\ast \mathcal{O}_C \cong \mathcal{O}_{\PP^1} \oplus \mathcal{O}_{\PP^1}(r_1) \oplus \mathcal{O}_{\PP^1}(r_2) \oplus \ldots \oplus \mathcal{O}_{\PP^1}(r_{d-1})$
which, according to a theorem due to Grothendieck, is indeed unique. As a consequence to the Riemann--Roch theorem, the Maroni invariants satisfy the following basic properties: \emph{(i)} $-1 \leq e_1 \leq e_2 \leq \ldots \leq e_{d-1}$, \emph{(ii)} $e_1 + e_2 + \ldots + e_{d-1} = g - d + 1$, and \emph{(iii)} $e_{d-1} \leq (2g-2)/d$.
\end{remark}

\subsection*{Models with ``free coefficients"} 
As mentioned in the introduction, every cover $\varphi : C \to \PP^1$ of degree $3 \leq d \leq 5$ admits a non-singular projective model with ``free coefficients" that can be lifted naively from $\FF_q$ to $\mathcal{O}_K$. This follows from Schreyer's proof~\cite[Cor.\,6.8]{schreyer} of the unirationality of $\mathcal{H}_{g,d}$ for $d \leq 5$. The natural ambient space for this model is a \emph{rational normal scroll}, which can be obtained by gluing together
\[ (\PP^1 \setminus \{ \infty \}) \times \PP^{d-2} \qquad \text{and} \qquad (\PP^1 \setminus \{ 0 \}) \times \PP^{d-2} \]
in a non-standard way; the gluing depends on the Maroni invariants $e_1, \ldots, e_{d-1}$ of $C$ with respect to $\varphi$. We refer to~\cite{eisenbudharris,schreyer} for more details on this construction, as well as on the claims below. For the sake of conciseness we only describe what the model looks like on the left copy $\AA^1 \times \PP^{d-2}$, which we equip with coordinates $x, Y_1, \ldots, Y_{d-1}$.

First assume that $d = 3$. Then $C$ admits a defining equation of the form
\begin{equation} \label{liftable3} 
\sum_{l_1 + l_2 = 3} f_{l_1,l_2}(x) Y_1^{l_1} Y_2^{l_2} = 0
\end{equation}
with $\deg f_{l_1, l_2} \leq l_1 e_1 + l_2 e_2 + 4 - g$, such that $\varphi$ corresponds to projection on the $x$-coordinate. 
Conversely, every irreducible polynomial of the form~\eqref{liftable3} defines a curve having genus at most $g$; this can also be seen using Baker's bound~\cite[Thm.\,2.4]{beelen}, because the dehomogenization with respect to $Y_2$ is supported on the polygon
\begin{figure}[htbp]
\begin{center}
 \begin{tikzpicture}[scale=0.45]
   \draw [thick] (0,0) -- (10,0) -- (4,3) -- (0,3) -- (0,0);
   \node at (-0.3,-0.4) {\small $(0,0)$};
   \node at (10.3,-0.4) {\small $(2e_2 - e_1 +2,0)$};
   \node at (5.3,3.4) {\small $(2e_1 - e_2 +2,3)$};
   \node at (-0.3,3.4) {\small $(0,3)$};
 %  \node at (2.8,1.4) {$\Delta_\text{trig}(e_1,e_2)$};
 \end{tikzpicture}
\end{center}
\caption{\small Polygon describing covers of degree $3$. \label{fig:trig} }
\end{figure}
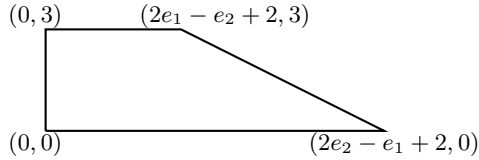
from Figure~\ref{fig:trig}. If equality holds then this polynomial defines a non-singular projective curve (on the entire rational normal scroll) and projection on the $x$-coordinate yields a degree $3$ morphism to $\PP^1$ whose associated Maroni invariants are $e_1, e_2$. 

Next, assume that $d = 4$. Then $C$ arises as the intersection of two surfaces defined by
\begin{equation} \label{liftable4} 
\sum_{l_1 + l_2 + l_3 = 2} f_{i, l_1,l_2, l_3}(x) Y_1^{l_1} Y_2^{l_2} Y_3^{l_3} = 0
\end{equation}
for $i = 1, 2$, where $\deg f_{i, l_1, l_2, l_3} \leq l_1e_1 + l_2e_2 + l_3e_3 - b_i$ for unique integers  $-1\leq b_1\leq b_2$ with $b_1+b_2 = g-5$,  called the Schreyer invariants of $C$ with respect to $\varphi$. 
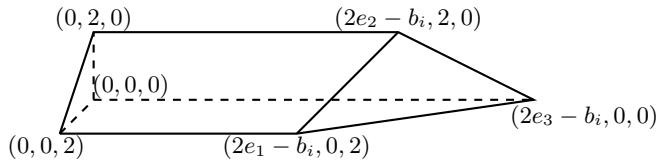
\begin{figure}[htbp]
\begin{center}
 \begin{tikzpicture}[scale=0.45]
   \draw [thick] (0,0) -- (7,0) -- (14,1) -- (10,3) -- (1,3) -- (0,0);
   \draw [thick] (7,0) -- (10,3);
   \draw [thick, dashed] (0,0) -- (1,1) -- (1,3);
   \draw [thick, dashed] (1,1) -- (14,1);
   \node at (-0.4,-0.4) {\small $(0,0,2)$};
   \node at (2.1,1.4) {\small $(0,0,0)$};   
   \node at (1,3.4) {\small $(0,2,0)$};
   \node at (10.3,3.4) {\small $(2e_2 - b_i,2,0)$};
   \node at (15.5,0.5) {\small $(2e_3 - b_i,0,0)$};
   \node at (7,-0.4) {\small $(2e_1 - b_i,0,2)$};
 %  \node at (6.1,2.2) {$\Delta_\text{tet}(e_1,e_2,e_3;b_i)$};
 \end{tikzpicture}
\end{center}
\caption{\small Polytope describing covers of degree $4$. \label{fig:tet} }
\end{figure}
Conversely, every irreducible such intersection defines a curve of genus at most $g$; this too can be seen using (a three-dimensional version of) Baker's bound~\cite[Thm.\,1]{khovanskii}, by noting that the dehomogenizations with respect to $Y_3$ are supported on the polytopes from Figure~\ref{fig:tet}. If equality holds then it concerns a non-singular projective curve, and projection on the $x$-coordinate defines a degree $4$ morphism to $\PP^1$ with associated Maroni invariants $e_1, e_2, e_3$ and Schreyer invariants $b_1, b_2$. 

Finally, assume $d=5$, which comes with five Schreyer invariants $b_1\leq \ldots \leq b_5$ summing up to $2g-12$. In this case $C$ can be viewed as the intersection of five hypersurfaces, which are all obtained from a single $5\times 5$ skew-symmetric matrix $M$ over $k[x][Y_1, Y_2, Y_3, Y_4]$ whose $(i,j)$-th entry is of the form 
\begin{equation} \label{liftable5}
M_{1,i,j}(x)Y_1 + M_{2,i,j}(x)Y_2 + M_{3,i,j}(x)Y_3 + M_{4,i,j}(x)Y_4
\end{equation}
with $M_{r,i,j}(x)\in k[x]$ of degree at most $e_r+b_i+b_j+6-g$. More precisely, our hypersurfaces are cut out by the five $4\times 4$ sub-Pfaffians\footnote{The square roots of the determinants of the five $4\times 4$ skew-symmetric submatrices.} of $M$. Conversely, whenever the $4 \times 4$ sub-Pfaffians of such a matrix define an irreducible curve, it has genus at most $g$. If equality holds then it concerns a non-singular projective curve, and projection on the $x$-coordinate defines a degree $5$ morphism to $\PP^1$ with Maroni invariants $e_1, e_2, e_3, e_4$ and Schreyer invariants $b_1, b_2, b_3, b_4, b_5$.

\subsection*{Lifting strategy revisited} In the next sections we show how results on ring parametrizations due to Delone--Faddeev~\cite[Prop.\,2.4]{gangrosssavin} and Bhargava~\cite{bhargava4,bhargava5} can be used to efficiently produce such a ``free coefficient" model for our input curve $\overline{C} / \FF_q$. Then, by the above discussion, and using that the genus cannot increase under reduction mod $p$, any naive coefficient-wise lift of this model to $\mathcal{O}_K$ will define a non-singular projective curve $C / K$ along with a morphism $\varphi : C \to \PP^1$ lifting $\overline{C}$ and $\overline{\varphi}$.

\begin{remark}
From a non-algorithmic viewpoint, the fact that the
Delone--Faddeev and Bhargava correspondences produce non-singular curves in rational normal scrolls might have been known to some specialists
(e.g., for $d=3$ this can be read in Zhao's Ph.D.\ thesis~\cite{zhao}).
\end{remark}

\section{Lifting curves in degree $d=3$} \label{sec:trigonal}

%In this and the next sections, we will describe how one can efficiently produce naively liftable models for low-degree covers of $\PP^1$. For this we will rely on the theory of ring parametrizations. 
For $R$ a PID, we recall that a \emph{ring of rank $d$} over $R$ is a commutative $R$-algebra which is free of rank $d$ as a module over $R$.
Every ring $S$ of rank $d$ over $R$ admits an $R$-basis of the form $1, \alpha_1, ..., \alpha_{d-1}$. This can be seen by
applying the structure theorem for finitely generated free modules over PIDs to the submodule $R \cdot 1$ of $S$.

\subsection*{Parametrizing cubic rings.} Let $R$ be a PID. Cubic rings over $R$ admit a parametrization using binary cubic forms over $R$, considered modulo a natural action by $\GL_2(R)$: for an element 
\[
A = \begin{pmatrix}
    a & b \\
    c & d
    \end{pmatrix} \in \GL_2(R),
\]
and $f = f_3Y_1^3+f_2Y_1^2Y_2+f_1Y_1Y_2^2+f_0Y_2^3$ a cubic form over $R$, we let
\[
A \ast f(Y_1,Y_2) = \frac{1}{\det A}f(a Y_1 + c Y_2, b Y_1 + d Y_2).
\]

\begin{theorem}[Delone--Faddeev]
There is a canonical bijection between the set of cubic $R$-rings up to isomorphism and binary cubic forms over $R$, modulo the action of $\GL_2(R)$.
\end{theorem}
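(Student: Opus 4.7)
The plan is to construct the bijection in both directions and verify the $\GL_2(R)$-equivariance by direct computation. For the forward map, I start with a cubic ring $S$ and pick an $R$-basis $1, \omega_1, \omega_2$, which exists by the structure theorem for finitely generated modules over a PID applied to the split embedding $R \hookrightarrow S$. Replacing $\omega_i$ by $\omega_i + r_i$ for suitable $r_i \in R$ does not alter the classes in $S/R$, and a short computation shows that this freedom suffices to normalize the basis so that $\omega_1 \omega_2 \in R$. Imposing associativity, specifically by comparing $(\omega_1 \omega_2)\omega_i$ with $\omega_1(\omega_2 \omega_i)$, forces the entire multiplication table to be determined by four scalars $\alpha, \beta, \gamma, \delta \in R$ through relations of the shape $\omega_1 \omega_2 = -\alpha\delta$, $\omega_1^2 = -\alpha\gamma + \beta \omega_1 - \alpha \omega_2$, and $\omega_2^2 = -\beta\delta + \delta \omega_1 - \gamma \omega_2$. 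I then assign to $S$ the binary cubic form $f(Y_1,Y_2) = \alpha Y_1^3 + \beta Y_1^2 Y_2 + \gamma Y_1 Y_2^2 + \delta Y_2^3$.

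For the inverse map, given such an $f$ I declare $S_f = R \oplus R\omega_1 \oplus R\omega_2$ with the multiplication table above. Commutativity is manifest, so the real content is associativity, which reduces to checking a finite list of identities on basis triples; each one is a polynomial identity in $\alpha, \beta, \gamma, \delta$ that holds unconditionally. Restricting to normalized bases, this is clearly inverse to the forward map.

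To make the correspondence canonical, I verify that any two normalized $R$-bases of a fixed $S$ differ by some $A \in \GL_2(R)$ acting linearly on $(\omega_1, \omega_2)$, possibly followed by a compensating translation to re-impose $\omega_1 \omega_2 \in R$. Tracking how the four structure scalars transform under such a change of basis yields exactly the stated action $A \ast f(Y_1, Y_2) = (\det A)^{-1} f(aY_1 + cY_2, bY_1 + dY_2)$; the factor $(\det A)^{-1}$ appears because a plain linear substitution on $(\omega_1, \omega_2)$ scales the structure scalars by $\det A$, which the re-normalization undoes to land back in the class of normalized multiplication tables.

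The main obstacle is the detailed bookkeeping in this last step: associativity for the inverse construction is a finite but lengthy check, and matching the $\GL_2(R)$-actions precisely---including the determinant twist and the interaction between linear substitutions and translations---requires careful tracking. Once this is done, the forward and inverse maps descend to mutually inverse maps on the level of $\GL_2(R)$-orbits of forms and isomorphism classes of cubic rings, yielding the asserted canonical bijection.
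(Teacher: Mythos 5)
Your proposal is correct and follows essentially the same route as the paper, which does not prove the theorem in full but cites Gan--Gross--Savin \cite[Prop.\ 4.2]{gangrosssavin} and then describes exactly this construction (normalized basis via the structure theorem, multiplication table determined by associativity, passage between a cubic form and a cubic ring, $\GL_2(R)$-action as change of normalized basis). Your structure constants match the paper's multiplication table \eqref{eq: multiplication of cubic ring}--\eqref{eq: products in mutliplication table} under the dictionary $\alpha,\beta,\gamma,\delta \leftrightarrow f_3,f_2,f_1,f_0$, so the only work you are adding beyond the paper is the routine verification of associativity for $S_f$ and the determinant twist in the action, both of which are the standard bookkeeping you identify.
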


For a proof, see e.g.~\cite[Prop.\,4.2]{gangrosssavin}. For use below we briefly describe how this bijection is constructed. Let $S$ be a cubic $R$-ring with basis $1, \alpha_1, \alpha_2$. By adding elements of $1\cdot R$ to $\alpha_1$ and $\alpha_2$ we can assume that $\alpha_1\alpha_2$ is in $R$. We call such bases \emph{normal}. Now write out the multiplication table of $S$:
\begin{equation} \label{eq: multiplication of cubic ring}
\begin{cases}
    \alpha_1\alpha_2 &= -g_0, \\
    \alpha_1^2 &= -g_1 + f_2\alpha_1 - f_3\alpha_2, \\
    \alpha_2^2 &= -g_2 + f_0\alpha_1 - f_1\alpha_2.
\end{cases}
\end{equation}
By associativity of $S$ we have $\alpha_1^2\cdot \alpha_2 = \alpha_1\cdot (\alpha_1 \alpha_2)$ and $\alpha_1\cdot \alpha_2^2=(\alpha_1 \alpha_2)\cdot\alpha_2$. This gives
\begin{equation}\label{eq: products in mutliplication table}
\begin{cases}
    g_0 &= f_0f_3, \\
    g_1 &= f_1f_3, \\
    g_2 &= f_0f_2,
\end{cases}
\end{equation}
so the $g_i$ are determined by the $f_i$. One then associates to $S$ the cubic form $f=f_3Y_1^3+f_2Y_1^2Y_2+f_1Y_1Y_2^2+f_0Y_2^3$. Conversely, given such a form $f$, associate to this the cubic ring, formally equipped with basis $1, \alpha_1, \alpha_2$ and multiplication defined by (\ref{eq: multiplication of cubic ring}) and (\ref{eq: products in mutliplication table}).
The $\GL_2(R)$-action on cubic forms corresponds precisely to changing one normal basis to another on the level of cubic rings.

\begin{remark} \label{remark_cubicfractionfield}
A cubic form $f = f_3Y_1^3+f_2Y_1^2Y_2+f_1Y_1Y_2^2+f_0Y_2^3$ is irreducible if and only if its associated cubic $R$-ring is a domain. In this case, we may describe it as the subring of
\begin{equation*} %\label{eq: function field delone faddeev}
\ff\left( \frac{R[y]}{(f_3y^3 + f_2y^2 + f_1y + f_0)}\right)
\end{equation*}
generated by $1, \alpha_1 = f_3y, \alpha_2 = - f_0y^{-1} = f_3y^2 + f_2y + f_1$. This point of view is especially nice when $R=k[x]$ for some field $k$. Indeed, then $f(y,1)=0$ defines a curve in $\AA^2$ over $k$ and the cubic ring associated to $f$ has as its field of fractions the function field of this curve.
\end{remark}
%For a binary cubic form $f(Y,Z)$ as above we may alternatively describe the cubic ring associated to it as the subring of 
%\begin{equation}\label{eq: function field delone faddeev}
%\ff\left( \frac{R[Y,Z]}{(f)}\right)
%\end{equation}
%generated by $1, -f_0Z, f_3Y$. This point of view is especially nice when $R=k[x]$. Indeed, then $f=0$ defines a trigonal curve in $\AA^1\times \PP^1$ and the cubic ring associated to $f$ has as its field of fractions the function field of this curve.

\subsection*{Lifting degree $3$ covers.} 
%We now specialize to $R= \FF_q[x]$. 
%where $k$ is a finite field. 
Consider the function field 
\[ \FF_q(\overline{C}) = \ff \left( \frac{\FF_q[x,y]}{(\overline{f}_3y^3+\overline{f}_2y^2+\overline{f}_1y +\overline{f}_0)} \right) \]
defined by our input polynomial, and consider the integral closure $\FF_q[\overline{C}]_0$ of $\FF_q[x]$ inside it; this is a cubic $\FF_q[x]$-ring. Let $e_1, e_2$ be the Maroni invariants of $\overline{C}$ with respect to $\overline{\varphi}$ and let $1, \alpha_1, \alpha_2$ be a corresponding reduced basis. After adding to $\alpha_1$ and $\alpha_2$ elements of $\FF_q[x]$ we may assume that this basis is normal. In more detail, if $\alpha_1\alpha_2 = a\alpha_1+b\alpha_2+c$, for $a,b,c\in \FF_q[x]$, then we replace $\alpha_1$ by $\alpha_1 - b$ and $\alpha_2$ by $\alpha_2-a$. This operation will not change the fact that the basis is reduced. Applying the Delone--Faddeev correspondence to this basis produces a new cubic form
\begin{equation*} %\label{eq:DFtrans}
\overline{f}(Y_1,Y_2) = \overline{f}_3Y_1^3+\overline{f}_2Y_1^2Y_2+\overline{f}_1Y_1Y_2^2+\overline{f}_0Y_2^3
\end{equation*}
whose coefficients we, abusingly, again denote by $\overline{f}_i$. 

\begin{lemma} \label{delonefaddeev_is_rightcurve} Let $\overline{f}$ be obtained through the Delone--Faddeev correspondence as above. Then this is a model for $\overline{C}$ of the form~\eqref{liftable3}. %In particular, any naive lift of $\overline{f}$ to $\mathcal{O}_K$ defines a smooth planar affine curve $C$ of genus $g$.

%The polynomial $\overline{f}(y,1) \in \FF_q[x,y]$ homogenizes to a polynomial of the form~\eqref{eq:homdef} with $m = 3$ and $b = 4 - g$, thereby defining a non-singular model of $\overline{C}$ inside $\PP(\mathcal{O}_{\PP^1}(e_1) \oplus \mathcal{O}_{\PP^1}(e_2))$.
\end{lemma}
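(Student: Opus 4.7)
The plan is to verify two things: that $\overline{f}(y, 1) = 0$ defines a plane model of $\overline{C}$ with $\overline{\varphi}$ realized as the $x$-projection, and that the coefficients $\overline{f}_i$ satisfy the degree bounds from~\eqref{liftable3}. The first part is immediate from Remark~\ref{remark_cubicfractionfield}: the cubic $\FF_q[x]$-ring we fed into the Delone--Faddeev correspondence is the integral domain $\FF_q[\overline{C}]_0$, whose field of fractions is $\FF_q(\overline{C})$. Hence $\overline{f}$ is irreducible, the plane curve $\overline{f}_3 y^3 + \overline{f}_2 y^2 + \overline{f}_1 y + \overline{f}_0 = 0$ has function field $\FF_q(\overline{C})$, and the inclusion $\FF_q(x) \hookrightarrow \FF_q(\overline{C})$ corresponding to $x$-projection recovers $\overline{\varphi}$.

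Since $e_1 + e_2 = g - 2$, the desired inequality $\deg f_{l_1, l_2} \leq l_1 e_1 + l_2 e_2 + 4 - g$ translates coefficient by coefficient into
\[
\deg \overline{f}_3 \leq 2e_1 - e_2 + 2, \quad \deg \overline{f}_2 \leq e_1 + 2, \quad \deg \overline{f}_1 \leq e_2 + 2, \quad \deg \overline{f}_0 \leq 2e_2 - e_1 + 2.
\]
The key tool is a ``no-cancellation'' property of the reduced basis: for any non-zero $\alpha = a_0 + a_1 \alpha_1 + a_2 \alpha_2 \in \FF_q[\overline{C}]_0$, let $N(\alpha)$ denote the smallest integer $N$ such that $\alpha \in x^N \FF_q[\overline{C}]_\infty$; then I claim
\[
N(\alpha) = \max\bigl\{\deg a_0,\ \deg a_1 + e_1 + 2,\ \deg a_2 + e_2 + 2\bigr\},
\]
with the convention $\deg 0 = -\infty$. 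To see this, rewrite $x^{-N}\alpha$ in the $\FF_q[1/x]$-basis $1,\, x^{-e_1-2}\alpha_1,\, x^{-e_2-2}\alpha_2$ of $\FF_q[\overline{C}]_\infty$ provided by Theorem~\ref{thm:hessbasis}. Membership in $\FF_q[\overline{C}]_\infty$ amounts to each of the three coefficients lying in $\FF_q[1/x]$, which is precisely the three degree inequalities $\deg a_0 \leq N$ and $\deg a_i + e_i + 2 \leq N$ for $i=1,2$.

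Applying this to $\alpha_1^2$, whose $N$-value is at most $2\,N(\alpha_1) = 2(e_1 + 2)$ because $\FF_q[\overline{C}]_\infty$ is a ring, the expansion $\alpha_1^2 = -g_1 + \overline{f}_2 \alpha_1 - \overline{f}_3 \alpha_2$ from~\eqref{eq: multiplication of cubic ring} immediately yields $\deg \overline{f}_2 \leq e_1 + 2$ and $\deg \overline{f}_3 \leq 2e_1 - e_2 + 2$. The symmetric argument with $\alpha_2^2 = -g_2 + \overline{f}_0 \alpha_1 - \overline{f}_1 \alpha_2$, bounded by $2(e_2 + 2)$, produces the remaining two inequalities. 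The only genuine obstacle is carefully establishing the no-cancellation formula for $N$; once that is in place, all four degree bounds drop out by directly reading off the multiplication table, completing the verification that $\overline{f}$ has the shape~\eqref{liftable3}.
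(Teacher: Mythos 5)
Your proof is correct and takes essentially the same route as the paper's: both deduce the degree bounds by scaling the reduced basis by $x^{-e_i-2}$ to get an $\FF_q[x^{-1}]$-basis of $\FF_q[\overline{C}]_\infty$ and then reading membership off the multiplication table $\alpha_1^2 = -g_1 + \overline{f}_2\alpha_1 - \overline{f}_3\alpha_2$, $\alpha_2^2 = -g_2 + \overline{f}_0\alpha_1 - \overline{f}_1\alpha_2$. Your ``no-cancellation'' function $N(\cdot)$ and its sub-multiplicativity are simply a tidy repackaging of the paper's step of rewriting the multiplication table over $\FF_q[x^{-1}]$ and observing that all coefficients must lie in $\FF_q[x^{-1}]$; the mathematical content is the same.
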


\begin{proof}
Note that the curve $\overline{f}(y,1) = 0$ is indeed birationally equivalent with $\overline{C}$, 
in view of Remark~\ref{remark_cubicfractionfield}. Denote by $e_1, e_2$ the Maroni invariants of $\overline{C}$. Since $1,\alpha_1, \alpha_2$ is a reduced basis, the elements $1, x^{-e_1-2}\alpha_1, x^{-e_2-2}\alpha_2$ form a basis for $\FF_q[\overline{C}]_\infty$, the integral closure of $\FF_q[x^{-1}]$ inside $\FF_q(\overline{C})$.  Writing out the multiplication for this ring gives
\begin{equation*}
\begin{cases}
    x^{-e_1-e_2-4}\alpha_1\alpha_2 &= -x^{-e_1-e_2-4}\overline{f}_0\overline{f}_3, \\
    x^{-2e_1-4}\alpha_1^2 &= -x^{-2e_1-4}\overline{f}_1\overline{f}_3 + x^{-e_1-2}\overline{f}_2x^{-e_1-2}\alpha_1 - x^{-2e_1+e_2-2}\overline{f}_3x^{-e_2-2}\alpha_2, \\
    x^{-2e_2-4}\alpha_2^2 &= -x^{-2e_2-4}\overline{f}_0\overline{f}_2 + x^{-2e_2+e_1-2}\overline{f}_0x^{-e_1-2}\alpha_1 - x^{-e_2-2}\overline{f}_1x^{-e_2-2}\alpha_2.
\end{cases}
\end{equation*}
Since the coefficients of this table must be elements of $\FF_q[x^{-1}]$ we see that 
$\deg \overline{f}_i \leq (i-1)e_1+(2-i)e_2+2$
for $i=1,2$, hence $\overline{f}(y,1)$ is supported on the polygon from Figure~\ref{fig:trig}. %By construction, the lift $f$ is supported on the same Newton polygon as $\overline{f}$. Hence it follows from Baker's bound~\cite[Thm.\,2.4]{beelen} that $C$ also has genus $g$.
%In other words, $\overline{f}(y,1)$ is supported on the polygon from Figure~\ref{fig:trig}. But the lifted polynomial $f$ has the same Newton polygon, so the claim follows from Baker's bound~\cite[Thm.\,2.4]{beelen}.
\end{proof}

%Thus, we can proceed as outlined in Section~\ref{sec:existence}. Note that, in practice, there is no need to pass through the homogenization: it suffices to naively lift the polynomials $\overline{f}_i(x)$ to polynomials $f_i(x) \in \mathcal{O}_K[x]$ in a degree-preserving way, and output $f_3(x)y^3 + \ldots + f_0(x)$. This polynomial can then be fed as input to Tuitman's algorithm.

Thus we can proceed as follows. We compute a reduced basis for the function field $\FF_q(\overline{C})$ over $\FF_q[x]$, make it normal if needed, and apply the Delone--Faddeev correspondence to it to obtain a model $\overline{f}=0$ of the form~\eqref{liftable3}. As discussed in Section~\ref{sec:preliminaries}, any naive coefficient-wise lift of the polynomial $\overline{f}(y,1)$ to a polynomial $f = f_3y^3 + f_2y^2 + f_1y + f_0 \in \mathcal{O}_K[x]$ defines a good lift. %This is because the genus cannot increase under reduction modulo $p$. 
After making the polynomial $f$ monic as in~\eqref{eq:liftedpol}, it can be fed to Tuitman's algorithm to compute the zeta function of $\overline{C}$ over $\FF_q$. 

\begin{remark} \label{goodbasis_d3}
Our discussion also shows that
$1, \ f_3y, \ f_0y^{-1} = f_3y^2 + f_2y + f_1$
is an integral basis of $K(C)$ over $K[x]$ that reduces to an integral basis of $\FF_q[\overline{C}]$ over $\FF_q[x]$. Using the variable change 
$\mathsf{x} = x^{-1}$ and $\mathsf{y} = y / x^{e_2 - e_1}$ we find the patch 
\[ f^{\text{recipr.}}_3(\mathsf{x})\mathsf{y}^3 + f^{\text{recipr.}}_2(\mathsf{x})\mathsf{y}^2 + f^{\text{recipr.}}_1(\mathsf{x})\mathsf{y} + f^{\text{recipr.}}_0(\mathsf{x})
\]
above infinity, which admits an analogous integral basis. Here 
$f^{\text{recipr.}}_i$ denotes the degree 
$(i-1)e_1 + (2-i)e_2 + 2$ reciprocal of $f_i$. 
%Then, similarly,
%\begin{multline} \label{eq:intbasis_3_infty}
%    1, \quad f_3^{\text{recipr.}}(\mathsf{x})\mathsf{y} = f_3^{\text{recipr.}}(x^{-1}) \frac{y}{x^{e_2 - e_1}}, \quad 
%    f_3^{\text{recipr.}}(\mathsf{x})\mathsf{y}^2 + f_2^{\text{recipr.}}(\mathsf{x})\mathsf{y} + f_1^{\text{recipr.}}(\mathsf{x}) = \\ 
%    f_3^{\text{recipr.}}(x^{-1}) \left( \frac{y}{x^{e_2 - e_1}} \right)^2 + f_2^{\text{recipr.}}(x^{-1}) \frac{y}{x^{e_2 - e_1}} + f_1^{\text{recipr.}}(x^{-1})
%\end{multline}
%is an integral basis of $K(C)$ over $K[1/x]$ that reduces to an integral basis
%of $\FF_q[\overline{C}]$ over $\FF_q[1/x]$. 
We can supply these bases as additional input to Tuitman's algorithm, thereby by-passing Heuristic H.
\end{remark}

\section{Lifting curves in degree $d=4$} \label{sec:tetragonal}

\subsection*{Parametrizing quartic rings.} The parametrization of quartic $R$-rings $S$ is due to Bhargava \cite{bhargava4}. This time, the objects involved are pairs of ternary quadratic forms, up to an action of $\GL_3(R)\times \GL_2(R)$. For an element 
\[
(A, B)\in \GL_3(R)\times \GL_2(R),
\]
and a pair of ternary quadratic forms $(Q_1, Q_2)$ over $R$ represented as $3\times 3$ matrices, the action is defined by 
\[
(A, B) \ast (Q_1, Q_2) = B\cdot \begin{pmatrix} A Q_1 A^T \\ A Q_2 A^T \end{pmatrix}.
\]
Concretely, the quadratic forms associated with a quartic ring are obtained by specifying a \emph{cubic resolvent} (the next paragraph provides more details):

\begin{theorem}[Bhargava]
There is a canonical bijection between pairs $(S, S')$ where $S$ is a quartic ring over $R$ and $S'$ is a cubic resolvent for $S$, considered up to isomorphism, and pairs of ternary quadratic forms over $R$, up to the action of $\GL_3(R)\times \GL_2(R)$.
\end{theorem}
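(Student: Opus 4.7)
The plan is to mirror the Delone--Faddeev proof sketched for cubic rings, while introducing the extra bookkeeping that the rank $4$ case requires. First I would fix a \emph{normal} $R$-basis $1, \alpha_1, \alpha_2, \alpha_3$ of $S$: analogously to how one normalized the cubic case by translating $\alpha_1, \alpha_2$ to force $\alpha_1 \alpha_2 \in R$, one can translate each $\alpha_i$ by an element of $R$ so that the ``constant terms'' in $\alpha_i \alpha_j \bmod (1, \alpha_k)$ are controlled (concretely, so that the coefficient of each $\alpha_i$ in $\alpha_i^2$ vanishes, which uniquely pins down the normalization up to $\GL_3(R)$ acting on the $\alpha_i$). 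In this normal form the multiplication is recorded by a manageable list of structure constants $c_{ij}^k, c_{ij}^0 \in R$, constrained by the associativity identities $(\alpha_i \alpha_j)\alpha_k = \alpha_i(\alpha_j \alpha_k)$.

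Next I would bring in the \emph{cubic resolvent} $S'$, which, as the paper foreshadows, is to be specified together with $S$. The role of $S'$, concretely, is to provide a distinguished cubic $R$-module quotient that packages the ``$\binom{4}{2} = 6$ unordered pairs of roots'' data, and in a fixed normal basis $1, \beta_1, \beta_2$ of $S'$ it becomes possible to organize the nine structure constants $c_{ij}^0$ (for $1 \leq i, j \leq 3$) into two $3 \times 3$ symmetric matrices $Q_1, Q_2$ over $R$, one per resolvent basis vector. The map sending $(S, S')$ together with normal bases to $(Q_1, Q_2)$ is then the forward direction of the bijection.

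Conversely, starting from a pair $(Q_1, Q_2)$ of ternary quadratic forms, I would build $S$ and $S'$ by declaring the $c_{ij}^0$ to be read off from the $Q_k$ and by \emph{forcing} the remaining structure constants $c_{ij}^k$ via the associativity relations, just as in the cubic case $g_0, g_1, g_2$ were determined by $f_0, f_1, f_2, f_3$ via equation \eqref{eq: products in mutliplication table}. The hard part, and the main obstacle, is verifying that these forced constants are consistent: one has to check that the resulting multiplication on $S$ is actually associative (not just partially so) and that the quotient map $S \otimes S \to S'$ encoded by the $Q_k$ descends compatibly, so that $S'$ is genuinely a cubic ring with a resolvent structure. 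This is a direct but delicate computation with quadratic identities in the entries of $Q_1, Q_2$; the cleanest route is to observe that the forced constants arise from the Pfaffian-like invariants of the pair $(Q_1, Q_2)$, which transform covariantly under $\GL_3(R) \times \GL_2(R)$, and then to verify associativity generically over the universal ring $\ZZ[\text{entries of } Q_1, Q_2]$, from which the result descends to arbitrary $R$.

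Finally, I would check equivariance and show the two constructions are mutually inverse. The $\GL_3(R)$-factor corresponds to a change of normal basis of $S$, acting as $Q_k \mapsto A Q_k A^T$; the $\GL_2(R)$-factor corresponds to a change of normal basis of $S'$ and mixes the pair $(Q_1, Q_2)$ linearly; the $(\det A)(\det B)^{-1}$ twist in the action soaks up the ambiguity in how the normalization condition interacts with base change (exactly as the $1/\det A$ appeared in the cubic action above). Once these two directions are shown to be $\GL_3(R) \times \GL_2(R)$-equivariant and inverse on the level of underlying sets of normal presentations, passing to isomorphism classes on one side and to orbits on the other completes the bijection.
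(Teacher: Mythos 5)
The paper does not prove this theorem at all: it attributes it to Bhargava, cites \cite[Thm.\,1]{bhargava4}, and explicitly says ``we will not explicitly rely on this theorem.'' What the paper does recall is only the \emph{construction} of the resolvent map $\phi$, which is the ingredient actually used later. So there is no in-paper proof to compare against, and your proposal should be judged against Bhargava's own argument.

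Measured against that, there is a genuine gap in the central step. You propose to ``organize the nine structure constants $c_{ij}^0$ (for $1 \leq i, j \leq 3$) into two $3 \times 3$ symmetric matrices $Q_1, Q_2$, one per resolvent basis vector.'' This does not work even at the level of counting: since $S$ is commutative, $c_{ij}^0 = c_{ji}^0$, so the constant terms of $\alpha_i \alpha_j$ form a \emph{single} symmetric $3\times 3$ matrix (six free entries), not a pair (twelve free entries). The two quadratic forms are not a repackaging of the multiplication constants of $S$ at all; they record a different piece of structure. Concretely, as the paper recalls right after the theorem statement, the pair $(Q_1, Q_2)$ arises by expressing the quadratic \emph{resolvent map}
\[
\phi \colon S/R \longrightarrow S'/R,\qquad \alpha \longmapsto \alpha^{(1)}\alpha^{(2)} + \alpha^{(3)}\alpha^{(4)},
\]
in the chosen bases $\alpha_1,\alpha_2,\alpha_3$ of $S/R$ and $\beta_1,\beta_2$ of $S'/R$: the $\beta_1$-coefficient of $\phi$ is $Q_1$ and the $\beta_2$-coefficient is $Q_2$. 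This is where the factor $\GL_2(R)$ (acting on $S'/R$) genuinely enters, and it is the part of Bhargava's construction your outline omits. Without it, the converse direction collapses: there is no way to recover the multiplication laws of both $S$ and $S'$, nor the compatibility making $S'$ a resolvent, from one symmetric matrix's worth of data.

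Two smaller points. First, the equivariance you describe inserts a determinant twist ``$(\det A)(\det B)^{-1}$'' by analogy with the cubic case, but the action recorded in the paper, $(A,B)\ast(Q_1,Q_2) = B\cdot \bigl(\begin{smallmatrix} A Q_1 A^T \\ A Q_2 A^T\end{smallmatrix}\bigr)$, carries no such twist; this analogy does not transfer. Second, you are right that the genuinely hard part of Bhargava's proof is the converse direction, where one must check that the structure constants forced by associativity and the resolvent identities are mutually consistent and produce a bona fide pair $(S,S')$; that intuition is sound, but the route you sketch to the forward map would need to be replaced by the resolvent-map construction before the converse can even be set up correctly.
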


%This is an analogue of the classical cubic resolvent from Galois theory. 
See~\cite[Thm.\,1]{bhargava4}, although we will not explicitly rely on this theorem. But we will recycle its central map $\phi$, whose construction we briefly recall, while zooming in on our main case of interest, namely where $S$ is a domain, say with field of fractions $F$. We assume moreover that $F$ is a separable $S_4$-extension of $K = \ff R$, i.e., its Galois closure $E/K$ has as Galois group the full symmetric group $S_4$.
%\footnote{These restrictions can be dispensed with by working with the more general notion of \emph{$S_k$-closure}~\cite{bhargava4}.} 
Then a cubic resolvent for $S$ is a certain full-rank subring $S'\subseteq E^{D_4}=: F^\mathrm{res}$, where $D_4 = \langle (12), (1324)\rangle$, see~\cite[Def.\,8]{bhargava4} for a precise definition. In general, there might be more than one cubic resolvent ring, but for maximal rings it is unique~\cite[Cor.\,5]{bhargava4}. Note that if $F = K[y]/(f)$ with 
\[ f = (y - r_1)(y-r_2)(y-r_3)(y-r_4) = y^4 + ay^3 + by^2 + cy + d \]
then $F^\mathrm{res} = K[y] / (\res f)$ with
\begin{align*} 
\res f & = (y- r_1r_2 - r_3r_4)(y - r_1r_3 - r_2r_4)(y - r_1r_4 - r_2r_3) \\
& = y^3 - by^2 + (ac - 4d)y - (a^2d + c^2 - 4bd).
\end{align*}
%for which $\disc S'=\disc S$.  
This polynomial is famously known as \emph{Lagrange's cubic resolvent}. The most important feature of the Bhargava correspondence is the natural quadratic map
\begin{equation*} %\label{centralmap4}
\tilde\phi: F\to F^\mathrm{res}: \alpha\mapsto \alpha^{(1)}\alpha^{(2)} + \alpha^{(3)}\alpha^{(4)},
\end{equation*}
where the $\alpha^{(i)}$ denote the conjugates of $\alpha$ inside $E$ (numbered compatibly with the roots $r_i$). This map turns out to descend to a quadratic map of $R$-modules
\[
\phi: \frac{S}{R} \to \frac{S'}{R}.
\]
Upon taking bases for $S/R$ and $S'/R$ we obtain our two ternary quadratic forms over $R$. Changing bases of these modules then corresponds to an element of $\GL_3(R)\times \GL_2(R)$.
%Bhargava has proven that the above correspondence is a bijection.

%\paragraph{The resolvent curve.} Let $C\to \PP^1$ be a simply branched tetragonal curve over a field $k$. The field $k(C)^\mathrm{res}$ is a cubic extension of $k(t)$ and so corresponds to a unique smooth projective trigonal curve $C^\mathrm{res}\to \PP^1$. The idea to lift tetragonal curves is then similar to to trigonal curves. Take a reduced basis for $k(C)$ and for $k(C^\mathrm{res})$ and apply the above parametrization to get two quadratic forms. 

\subsection*{Lifting degree $4$ covers.} 
We can assume that $\overline{f}_4 = 1$, i.e., our input polynomial~\eqref{eq:inputpol} is monic.
Let $\FF_q(\overline{C})$ denote the function field it defines, which is a separable $S_4$-extension of $\FF_q(x)$ because $\overline{\varphi}$ is simply branched~\cite[Lem.\,6.10]{fulton}. Similarly, consider the cubic resolvent
\begin{equation}\label{eq: cubic resolvent polynomial} 
y^3 - \overline{f}_2y^2 + (\overline{f}_1 \overline{f}_3 - 4 \overline{f}_0)y - (\overline{f}_0 \overline{f}_3^2 + \overline{f}_1^2 - 4 \overline{f}_0 \overline{f}_2) 
\end{equation}
defining $\FF_q(\overline{C}^\mathrm{res}) := \FF_q(\overline{C})^\mathrm{res}$. We let $\FF_q[\overline{C}]_0$ and $\FF_q[\overline{C}^\mathrm{res}]_0$ be the respective integral closures of $R = \FF_q[x]$ inside these fields. It can be argued that $\FF_q[\overline{C}^\mathrm{res}]_0$ is the unique cubic resolvent ring $S'$ for $S = \FF_q[\overline{C}]_0$, but for our needs it suffices to know that $S' \subseteq \FF_q[\overline{C}^\mathrm{res}]_0$, which is immediate since $\FF_q[\overline{C}^\mathrm{res}]_0$ is maximal. 

Let $e_1, e_2, e_3$ be the Maroni invariants of $\overline{C}$ with respect to $\overline{\varphi}$, and let $b_1, b_2$ be its Schreyer invariants. 
%Let $\overline{C}\to \PP^1$ be a simply branched tetragonal curve over $\FF_q$. 
Take reduced $\FF_q[x]$-bases $1, \alpha_1, \alpha_2, \alpha_3 \in \FF_q[\overline{C}]_0$ and $1, \beta_1, \beta_2 \in \FF_q[\overline{C}^\mathrm{res}]_0$. With respect to these bases, the map $\phi$ above gives us two ternary quadratic forms $\overline{Q}_1, \overline{Q}_2 \in \FF_q[x][Y_1, Y_2, Y_3]$. 
To properly bound the degrees of their coefficients, we have to understand how the Maroni invariants of the resolvent curve $\overline{C}^\mathrm{res}$ relate to data associated with $\overline{C}$. Surprisingly, up to a small shift, these turn out to be the Schreyer invariants of $\overline{C}$ with respect to $\overline{\varphi}$:

\begin{theorem}\label{thm: maroni invariants of resolvent}
Let $k$ be a field of characteristic $\neq 2$ and consider a smooth projective curve over $k$ equipped with a simply branched degree $4$ morphism to $\PP^1$, say with Schreyer invariants $b_1, b_2$. Then the Maroni invariants of its cubic resolvent are $b_1 + 2, b_2 + 2$.
\end{theorem}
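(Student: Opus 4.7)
The idea is to apply the Bhargava quadratic map with reduced $k[x]$-bases on both $k[C]_0$ and $k[C^{\mathrm{res}}]_0$, extract degree bounds from integrality at infinity, and match them against Schreyer's bounds.

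\emph{Setup.} Take a reduced basis $1, \alpha_1, \alpha_2, \alpha_3$ of $k[C]_0$ with Maroni invariants $e_1, e_2, e_3$, and a reduced basis $1, \beta_1, \beta_2$ of $k[C^{\mathrm{res}}]_0$ with (so far unknown) Maroni invariants $e'_1, e'_2$; the goal is $e'_i = b_i + 2$. After adding $k[x]$-constants, as done in Section 3 for cubic rings, both bases may be taken normal in the Bhargava sense, while retaining reducedness. Applying the Bhargava map $\tilde\phi(\alpha) = \alpha^{(1)}\alpha^{(2)} + \alpha^{(3)}\alpha^{(4)}$ and polarizing yields two ternary quadratic forms $Q_1, Q_2 \in k[x][Y_1, Y_2, Y_3]$, where $Q_i$ is the $\beta_i$-component of $\tilde\phi(\sum_l Y_l \alpha_l)$ modulo $k[x]$.

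\emph{Degree bounds from integrality.} Because $\tilde\phi$ is a symmetric polynomial expression in Galois conjugates and the Galois action permutes the places above $\infty$, the map $\tilde\phi$ sends elements integral at infinity to elements integral at infinity. Combining this with the reduced basis property --- i.e., $x^{-e_l - 2} \alpha_l \in k[C]_0$ and $x^{-e'_i - 2} \beta_i \in k[C^{\mathrm{res}}]_0$ are integral at all places above $\infty$ --- a short polarization computation shows
\[
\deg q^{(i)}_{l,m} \leq e_l + e_m + 2 - e'_i
\]
for the coefficient $q^{(i)}_{l,m}$ of $Y_l Y_m$ in $Q_i$. This is exactly the Schreyer degree bound from Section 2, with $b_i$ replaced by $e'_i - 2$.

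\emph{Comparison with Schreyer.} By the Bhargava correspondence, the pair $(Q_1, Q_2)$ recovers the multiplication tables of $k[C]_0$ and its cubic resolvent, and (after dehomogenization in one $Y$-coordinate) cuts out $C$ in its rational normal scroll $\PP(\mathcal{E})$; so $(Q_1, Q_2)$ is a genuine Schreyer defining pair. Its ruling degrees are therefore at most $-(e'_i - 2)$, and the uniqueness of the Schreyer invariants forces $b_i \geq e'_i - 2$. For the reverse inequality, globalize $\tilde\phi$ to a morphism of sheaves $\phi: \mathrm{Sym}^2 \mathcal{T} \to \mathcal{T}'$ between the Tschirnhaus bundles of $\varphi$ and $\varphi^{\mathrm{res}}$. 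Surjectivity of $\phi$ --- which follows from the fact that $k(C^{\mathrm{res}})$ is generated over $k(x)$ by values of $\tilde\phi$ --- forces some coefficient $q^{(i)}_{l,m}$ to meet the Bhargava bound with equality, giving $b_i \leq e'_i - 2$.

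\emph{Main obstacle.} The comparison step is the crux: aligning the algebraic Bhargava pair with the geometric Casnati--Ekedahl scroll realization, and ensuring sharpness of the Bhargava bound via a sheaf-theoretic surjectivity argument. After these identifications, the theorem reduces to matching two explicit degree bounds, and the shift $e'_i = b_i + 2$ appears as nothing more than the additive constant $+2$ in the Bhargava bound of the second step.
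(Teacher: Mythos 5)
The paper's proof is a one-line citation to Casnati~\cite[Def.\,6.4]{casnati} (reformulated via Recillas' trigonal construction). Your proposal instead gives a direct argument via Bhargava's quadratic map and degree counting, essentially inverting the logic of the paper's Lemma~\ref{lem:liftablequadrics} (which \emph{uses} Theorem~\ref{thm: maroni invariants of resolvent} to get the degree bound, whereas you try to \emph{derive} the Maroni invariants from the same degree computation). This is a genuinely different route, and it is the kind of self-contained proof the paper declines to write out; it is also close in spirit to what the cited master thesis~\cite{vermeulen} must do for the quintic case.

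The degree-bound step is sound: integrality of $\tilde\phi$ at infinity together with the reduced-basis structure on both sides does give $\deg q^{(i)}_{l,m}\leq e_l+e_m+2-e'_i$, by exactly the diagonal-change-of-basis argument used in the paper's proof of Lemma~\ref{lem:liftablequadrics}. (One small inaccuracy: you invoke a ``normal basis'' normalization for the quartic case in analogy with Section~\ref{sec:trigonal}, but Bhargava's quartic correspondence needs no such normalization — and it is not used in the paper's quartic computation either.)

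The comparison step is where the proposal has gaps. First, to conclude $b_i\geq e'_i-2$ you assert that $(Q_1,Q_2)$ ``is a genuine Schreyer defining pair''; but cutting out $C$ set-theoretically is not enough — you need the pair to generate the ideal sheaf $\mathcal I_C$ in the scroll and to realize the minimal resolution, and you need the twists assigned by the reduced-basis structure of the resolvent to be the ones appearing in that resolution. None of this is automatic from what you have written, and there is a $\GL_2$-ambiguity in the choice of pair that must be shown to line up with the Maroni splitting of $\varphi^{\mathrm{res}}_*\mathcal O$. Second, your reverse inequality $b_i\leq e'_i-2$ rests on a ``sheaf surjectivity implies sharpness'' claim that is not justified: surjectivity of a sheaf map does not by itself force degree equality in any particular matrix entry. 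A much cleaner way to close the gap, once you have the one inequality, is a genus count. The trigonal resolvent of a simply branched degree-$4$ cover is itself simply branched over the same set of branch points (the resolvent discriminant equals the quartic discriminant up to squares, and $\mathrm{char}\,k\neq 2$ rules out wild ramification), so Riemann--Hurwitz gives $g(C^{\mathrm{res}})=g+1$, hence $e'_1+e'_2=g-1=(b_1+2)+(b_2+2)$. Combined with $b_i\geq e'_i-2$ for $i=1,2$, this pins down $b_i=e'_i-2$. I recommend replacing the surjectivity step with this genus computation, and adding a justification that the Bhargava pair realizes the Schreyer resolution rather than merely the underlying subvariety.
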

\begin{proof}
This result is due to Casnati~\cite[Def.\,6.4]{casnati}, although he formulated it in terms of Recillas' trigonal construction, which is the geometric counterpart of Lagrange's cubic resolvent, as pointed out in~\cite[\S8.6]{vangeemen}.
\end{proof}

%Dehomgenizing $A$ and $B$, they define two surfaces $Y$ and $Z$ in $\AA^3=\AA^1\times \AA^2$.

\begin{lemma} \label{lem:liftablequadrics}
The quadratic forms $\overline{Q}_1, \overline{Q}_2$ obtained through Bhargava's correspondence as above are a model of $\overline{C}$ of the form~\eqref{liftable4}.
%Let $Q_1, Q_2$ be any naive lifts of the forms $\overline{Q}_1, \overline{Q}_2$ constructed above. Then the affine curve $C$ in $\AA^3$ defined by $Q_1=Q_2=0$ is a lift of $\overline{C}$ of genus $g$.
%After homogenizing their coefficients, the quadratic forms $\overline{Q}_1, \overline{Q}_2$ become polynomials of the form~\eqref{eq:homdef} with $m = 2$ and $b = -b_1$ resp.\ $b = -b_2$, thereby defining a non-singular model of $\overline{C}$ in $\PP(\mathcal{O}_{\PP^1}(e_1) \oplus \mathcal{O}_{\PP^1}(e_2) \oplus \mathcal{O}_{\PP^1}(e_3))$.
%The dehomogenizations of $A$ and $B$ have Newton polygons as in figure \ref{fig:tet}. Hence $C$ is defined as in \todo{naar begin ergens referene}
\end{lemma}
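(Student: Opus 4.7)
The plan is to verify (i) that the common vanishing of $\overline{Q}_1$ and $\overline{Q}_2$ defines a curve birationally equivalent to $\overline{C}$, and (ii) that the coefficients satisfy the degree bounds in~\eqref{liftable4}. The whole argument closely parallels Lemma~\ref{delonefaddeev_is_rightcurve} from the degree $3$ case, with Bhargava's correspondence replacing Delone--Faddeev and Theorem~\ref{thm: maroni invariants of resolvent} supplying the crucial bookkeeping for the cubic resolvent curve.

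For (i), I would invoke Bhargava's correspondence in the reverse direction: with the bases fixed as above, the pair $(\overline{Q}_1, \overline{Q}_2)$ canonically determines a quartic $\FF_q[x]$-ring isomorphic to $S = \FF_q[\overline{C}]_0$. Geometrically this says that the common zero locus of $\overline{Q}_1, \overline{Q}_2$ in $\PP^2 \times \spec \FF_q[x]$ is a degree $4$ cover of $\spec \FF_q[x]$ whose generic fiber is $\spec \ff(S) = \spec \FF_q(\overline{C})$, so it is birationally equivalent to $\overline{C}$.

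For (ii), I would use the polynomial identity
\[
\tilde\phi\!\Bigl(\sum_{i=1}^{3} t_i \alpha_i\Bigr) = P(t_1,t_2,t_3) + \overline{Q}_1(t_1,t_2,t_3)\,\beta_1 + \overline{Q}_2(t_1,t_2,t_3)\,\beta_2,
\]
with $P, \overline{Q}_1, \overline{Q}_2 \in \FF_q[x][t_1,t_2,t_3]$ (here $P$ is the auxiliary coefficient of $1$ in the basis of $S'$), and substitute $t_i = u_i x^{-e_i-2}$ for formal variables $u_i$. By Theorem~\ref{thm:hessbasis}, the elements $1, x^{-e_1-2}\alpha_1, x^{-e_2-2}\alpha_2, x^{-e_3-2}\alpha_3$ form an $\FF_q[x^{-1}]$-basis of $\FF_q[\overline{C}]_\infty$, so $\gamma := \sum_{i=1}^{3} u_i x^{-e_i-2}\alpha_i$ lies in $\FF_q[\overline{C}]_\infty[u_1,u_2,u_3]$. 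Since $\tilde\phi$ is a polynomial in Galois conjugates it preserves integrality over $\FF_q[x^{-1}]$, so $\tilde\phi(\gamma) \in \FF_q[\overline{C}^\mathrm{res}]_\infty[u_1,u_2,u_3]$. Combining Theorem~\ref{thm:hessbasis} with Theorem~\ref{thm: maroni invariants of resolvent}, the latter ring has $\FF_q[x^{-1}]$-basis $1, x^{-b_1-4}\beta_1, x^{-b_2-4}\beta_2$; expanding $\tilde\phi(\gamma)$ in this basis and matching $\beta_i$-components forces $\overline{Q}_i(u_1 x^{-e_1-2}, u_2 x^{-e_2-2}, u_3 x^{-e_3-2}) \in x^{-b_i-4}\FF_q[x^{-1}][u_1,u_2,u_3]$. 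Reading off each $u_1^{l_1} u_2^{l_2} u_3^{l_3}$-coefficient and using $l_1+l_2+l_3 = 2$ then yields $\deg f_{i, l_1, l_2, l_3} \leq l_1 e_1 + l_2 e_2 + l_3 e_3 - b_i$.

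The main obstacle is part (i): one must be careful that Bhargava's correspondence, applied to our chosen bases, canonically returns a ring isomorphic to $S$ rather than some $\GL_3(\FF_q[x]) \times \GL_2(\FF_q[x])$-twist of it. Granting this, part (ii) is mechanical, with Theorem~\ref{thm: maroni invariants of resolvent} doing the critical work of translating the Maroni-invariant bound for $\overline{C}^\mathrm{res}$ into the Schreyer-invariant bound for $\overline{C}$.
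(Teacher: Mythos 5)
Your proposal is correct and follows essentially the same route as the paper: it too establishes birationality by invoking Bhargava's construction (with a resultant computation offered as an alternative check), and derives the degree bounds by passing to the $\FF_q[x^{-1}]$-bases of $\FF_q[\overline{C}]_\infty$ and $\FF_q[\overline{C}^\mathrm{res}]_\infty$ via Theorem~\ref{thm: maroni invariants of resolvent} and observing that the descended quadratic map is represented, after the diagonal change of basis, by forms with $\FF_q[x^{-1}]$-coefficients. The worry you raise in the last paragraph is unfounded: Bhargava's correspondence is a bijection, so starting from $(S,S')$ with any $\FF_q[x]$-bases of $S/\FF_q[x]$ and $S'/\FF_q[x]$ and inverting recovers $(S,S')$ up to isomorphism, not a twist.
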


%\begin{lemma}
%The intersection of $Y$ and $Z$ defines a model of $C$ in $\AA^3$.
%\end{lemma}
%\begin{proof}
%This follows from \cite[Sec.\,2]{bhargava5}.
%\end{proof}

\begin{proof}
Note that the polynomials indeed cut out a curve that is birationally equivalent with $\overline{C}$, in view of~\cite[\S2]{bhargava5}.\footnote{Alternatively, the reader can check that $\res_{y_2}(\overline{Q}_1'(y_1,y_2,1),\overline{Q}_2'(y_1,y_2,1)) = y_1^4 + \overline{f}_3y_1^3 + \overline{f}_2y_1^2 + \overline{f}_1y_1 + \overline{f}_0$, where $\overline{Q}_1'$ and $\overline{Q}_2'$ are the quadratic forms from below.} Since $1, \alpha_1, \alpha_2, \alpha_3$ and $1, \beta_1, \beta_2$ are reduced bases, 
%for $\FF_q[\overline{C}]_0$ resp.\ $\FF_q[\overline{C}^\mathrm{res}]_0$, 
by Theorem \ref{thm: maroni invariants of resolvent} we have that 
\begin{align*}
& 1, x^{-e_1-2}\alpha_1, x^{-e_2-2}\alpha_2, x^{-e_3-2}\alpha_3 \text{ and } \\
& 1, x^{-b_1-4}\beta_1, x^{-b_2-4}\beta_2
\end{align*}
are bases of $\FF_q[\overline{C}]_\infty$, resp.\ $\FF_q[\overline{C}^\mathrm{res}]_\infty$,
the integral closures of $\FF_q[x^{-1}]$ in 
$\FF_q(\overline{C})$, resp.\ $\FF_q(\overline{C}^\mathrm{res})$.
Now the quadratic map 
\[ \tilde\phi: \FF_q(\overline{C})\to \FF_q(\overline{C}^\mathrm{res}) \] 
from above also descends to a quadratic map of $\FF_q[x^{-1}]$-modules
\[
\phi': \frac{\FF_q[\overline{C}]_\infty}{\FF_q[x^{-1}]}\to \frac{\FF_q[\overline{C}^\mathrm{res}]_\infty}{\FF_q[x^{-1}]}.
\]
 With respect to the above bases, $\phi'$ is defined by two quadratic forms over $\FF_q[x^{-1}]$, which are necessarily obtained from $\overline{Q}_1$ and $\overline{Q}_2$ by applying the corresponding (diagonal) change of basis matrices. In other words, $\phi'$ is represented by the quadratic forms
\begin{align*}
    &x^{b_1+4}\overline{Q}_1(x^{-e_1-2}Y_1,x^{-e_2-2}Y_2, x^{-e_3-2}Y_3), \\ &x^{b_2+4}\overline{Q}_2(x^{-e_1-2}Y_1,x^{-e_2-2}Y_2, x^{-e_3-2}Y_3).
\end{align*}
But these have coefficients in $\FF_q[x^{-1}]$. Hence the degree of the $Y_iY_j$-coefficient in $\overline{Q}_1$ can be at most
$e_i+e_j-b_1$,
and similarly for $\overline{Q}_2$. In other words, the dehomogenized polynomials $\overline{Q}_1(y_1,y_2,1)$ and $\overline{Q}_2(y_1,y_2,1)$ are supported on the polytopes from Figure~\ref{fig:tet}. 
%Using that these polytopes have no interior $\ZZ^3$-points and that their Minkowski sum has $g$ interior $\ZZ^3$-points, a higher-dimensional version of Baker's theorem~\cite[Thm.\,1]{khovanskii} implies that the genus of $C$ is at most $g$. Combining this with the fact $\overline{C}$ is of genus $g$, we obtain the lemma.
\end{proof}

To compute these liftable quadrics $\overline{Q}_1, \overline{Q}_2$ in practice we will not directly compute the resolvent map $\phi$ with respect to reduced bases for $\FF_q(\overline{C})$ and $\FF_q(\overline{C}^\mathrm{res})$. Instead, we compute the map $\phi$ with respect to certain \emph{naive bases} for $\FF_q(\overline{C})$ and $\FF_q(\overline{C}^\mathrm{res})$ and then apply change of basis to a reduced basis.
In more detail, 
%we view $\FF_q(\overline{C})$ as the field of fractions of 
%\[
%\frac{\FF_q[x][y]}{(y^4+\overline{f}_3y^3+\overline{f}_2y^2 +\overline{f}_1y+\overline{f}_0)}
%\]
%and similarly for $\FF_q(\overline{C}^\mathrm{res})$. 
% (STOND HIERBOVEN AL GEDEFINIEERD)
denoting by $\overline{f}_i'$ the coefficients of the cubic resolvent polynomial of $\overline{f}$ as in \eqref{eq: cubic resolvent polynomial}, we consider the bases
\begin{align}\label{eq: naive bases tetragonal curve}
1, -\overline{f}_0y^{-1}, y, y^2 & \text{ for } \FF_q(\overline{C}) \text{ and} \\
1, y, -\overline{f}_0'y^{-1} & \text{ for } \FF_q(\overline{C}^\mathrm{res}). \nonumber
\end{align}
Computing the representation of the resolvent map $\phi$ with respect to these bases can be done symbolically by means of Vieta's formulas, yielding the quadrics
\begin{equation} \label{vietaquadric}
\overline{Q}_1' = \begin{pmatrix}
\overline{f}_0 & 0 & \frac{\overline{f}_1}{2} \\
0 & 1 & \frac{-\overline{f}_3}{2} \\
\frac{\overline{f}_1}{2} & \frac{-\overline{f}_3}{2} & \overline{f}_2
\end{pmatrix}, \qquad \overline{Q}_2' = \begin{pmatrix}
0 & \frac{-1}{2} & \frac{\overline{f}_3}{2} \\
\frac{-1}{2} & 0 & 0 \\
\frac{\overline{f}_3}{2} & 0 & 1
\end{pmatrix}.
\end{equation}
Now let $1, \alpha_1, \alpha_2, \alpha_3$ and $1, \beta_1, \beta_2$ be reduced bases for $\FF_q[\overline{C}]_0$, resp.\ $\FF_q[\overline{C}^\mathrm{res}]_0$, as above. To compute the cubic resolvent map with respect to these bases, we simply apply the change of basis action from the naive bases in \eqref{eq: naive bases tetragonal curve} to these reduced bases. 
We note that this involves elements of $\GL_3(\FF_q(x))\times \GL_2(\FF_q(x))$ rather than
$\GL_3(\FF_q[x])\times \GL_2(\FF_q[x])$.
The resulting quadrics $\overline{Q}_1, \overline{Q}_2$ will be our model of the form~\eqref{liftable4}. Then, as explained in Section~\ref{sec:preliminaries}, we can take any $Q_1, Q_2\in \mathcal{O}_K[x][y_1, y_2]$ lifting the $\overline{Q}_i(y_1,y_2,1)$'s in a support-preserving way. In order to find a plane model, we can compute the resultant $\res_{y_2}(Q_1, Q_2)$,
which is indeed of degree $4$ in $y=y_1$. After making it monic, it can be fed as input to Tuitman's algorithm.

\section{Lifting curves in degree $d=5$} \label{sec:pentagonal}

\subsection*{Parametrizing quintic rings.} The parametrization of quintic $R$-rings $S$ is also due to Bhargava \cite{bhargava5}. We assume that $\charac R\neq 2,3$. The objects involved in the parametrization are now quadruples of $5\times 5$ skew-symmetric matrices over $R$. There is a natural action of $\GL_5(R) \times \GL_4(R) $ on such objects, given by
\[
(A,B)\ast M = B\cdot 
\begin{pmatrix} A M_1 A^T \\
A M_2 A^T \\
A M_3 A^T \\
A M_4 A^T 
\end{pmatrix},
\]
with $M=(M_1, M_2, M_3, M_4)$ a quadruple of $5\times 5$ skew-symmetric matrices and $(A, B)\in \GL_5(R) \times \GL_4(R)$. Here too, the parametrization requires us to specify a \emph{sextic resolvent}  (see the next paragraph for details):

\begin{theorem}[Bhargava]
There is a canonical bijection between pairs $(S,S')$ where $S$ is a quintic ring and $S'$ is a sextic resolvent for $S$, considered up to isomorphism, and quadruples of $5\times 5$ skew-symmetric matrices over $R$, up to the action of $\GL_5(R) \times \GL_4(R)$.
\end{theorem}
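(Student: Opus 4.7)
The plan is to follow Bhargava's construction in~\cite{bhargava5}, establishing the bijection in both directions and verifying its $\GL_5(R) \times \GL_4(R)$-equivariance.

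For the forward direction, given a pair $(S, S')$ I would choose $R$-bases $1, \alpha_1, \ldots, \alpha_4$ of $S$ and $1, \beta_1, \ldots, \beta_5$ of $S'$. By analogy with the Delone--Faddeev and quartic Bhargava correspondences recalled above, the sextic resolvent structure should package a canonical antisymmetric bilinear map on $S/R$ (equivalently, an $R$-linear map $\wedge^2 S \to S' \otimes_R \mathcal{L}$ for some auxiliary invertible module) whose expression in the chosen bases yields four skew-symmetric $5 \times 5$ matrices $M_1, \ldots, M_4$ over $R$, one for each nontrivial basis vector $\beta_j$, subject to the normalization that the contribution of $1 \in S'$ is absorbed into the choice of basis on $S/R$.

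For the reverse direction, one reconstructs $S$ from a quadruple $(M_1, \ldots, M_4)$ by reading the $4 \times 4$ sub-Pfaffians of the pencil $Y_1 M_1 + \cdots + Y_4 M_4$ as structure constants of a multiplication on a free rank-$5$ $R$-module, and recovers $S'$ from the resolvent data canonically attached to this pencil (mirroring how the cubic form in the Delone--Faddeev correspondence determines the full multiplication table of a cubic ring). Equivariance is then checked directly: a change of basis of $S/R$ by $A \in \GL_5(R)$ sends each $M_i \mapsto A M_i A^T$, while a change of basis of $S'/R$ by $B \in \GL_4(R)$ mixes the four matrices linearly via $B$. Standard uniqueness arguments would then show that the two constructions are mutually inverse up to the stated group actions.

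The main obstacle is the reverse direction: one has to show that the multiplication reconstructed from an \emph{arbitrary} quadruple is associative, and that a genuine sextic resolvent ring exists. This is the heart of Bhargava's theorem and rests on the Buchsbaum--Eisenbud structure theorem for Gorenstein ideals of codimension $3$: the $4 \times 4$ sub-Pfaffians of a generic skew-symmetric $5 \times 5$ matrix cut out such an ideal, and its minimal free resolution supplies precisely the polynomial identities among Pfaffians that force associativity. Working these identities out in the required generality is highly technical, so I would defer to Bhargava's treatment in~\cite{bhargava5} rather than reproduce that argument. The assumption $\charac R \neq 2, 3$ enters through the need to invert small integers when building the resolvent, exactly as in the quartic case where $\charac R \neq 2$ was needed for Lagrange's cubic resolvent.
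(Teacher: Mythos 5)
The paper does not prove this theorem at all; it simply cites Bhargava's paper~\cite{bhargava5} and explicitly remarks that it ``will not explicitly rely on this theorem,'' only on the fundamental resolvent map. So there is no proof in the paper to compare against, and any sketch you give is effectively standalone.

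That said, your sketch contains a structural error that inverts the roles of the two rings. You write that the bilinear map lives on $S/R$ and yields one $5\times5$ matrix per basis vector $\beta_j$ of $S'/R$, with $A \in \GL_5(R)$ acting on $S/R$ and $B \in \GL_4(R)$ on $S'/R$. But $S$ is a quintic ring, so $S/R$ has rank $4$, while $S'$ is a sextic ring, so $S'/R$ has rank $5$. A bilinear form on $S/R$ would produce $4\times4$ matrices, and indexing by the five $\beta_j$'s would give five of them — the wrong shape of data. The actual construction, as the paper recalls via the fundamental resolvent map $g : F^{\mathrm{res}} \times F^{\mathrm{res}} \to F$, is an alternating bilinear map on (a dual of) the rank-$5$ module $S'/R$ taking values in (a dual of) the rank-$4$ module $S/R$. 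This is what produces four $5\times5$ skew-symmetric matrices: the $5\times5$ size comes from $S'$, the four comes from $S$. Correspondingly, $\GL_5(R)$ acts by change of basis on the $S'$-side (sending $M_i \mapsto A M_i A^T$) and $\GL_4(R)$ acts by change of basis on the $S$-side (mixing the four matrices). Your equivariance paragraph has these swapped. The reverse direction via $4\times4$ sub-Pfaffians and the Buchsbaum--Eisenbud structure theorem is the right idea at a high level, but again the object being reconstructed from the Pfaffian ideal is the quintic ring $S$ (embedded via five quadrics in $\PP^3$), so the variables $Y_1,\ldots,Y_4$ parametrize $S/R$, not the matrix entries. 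You should redo the dimension bookkeeping with $S$ and $S'$ in the correct positions before the argument can be made precise.
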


See~\cite{bhargava5}, although as in the previous sections, we will not explicitly rely on this theorem. But we will need the fundamental resolvent map~\eqref{fundamentalmap} below. Let us again focus on the setting where $S$ is a domain with field of fractions $F$, and let $K = \ff R$. We assume that $F$ is a separable $S_5$-extension of $K$, i.e., its Galois closure $E/K$ has as Galois group the whole of $S_5$. Consider the order $20$ subgroup $H=H^{(1)}=\operatorname{AGL}_1(\FF_5)=\langle (12345), (1243)\rangle \subseteq S_5$. 
Then a sextic resolvent for $S$ is a certain full-rank subring $S' \subseteq E^H =: F^\mathrm{res}$; for a precise definition we refer to~\cite[Def.\,5]{bhargava5}. In general, such a sextic resolvent ring is not unique, but for maximal quintic rings it is~\cite[Cor.\,19]{bhargava5}. If $F = K[y]/(f)$ with
\[ f = (y-r_1)(y-r_2)(y-r_3)(y-r_4)(y-r_5) = y^5 + ay^4 + by^3 + cy^2 + dy + e,\]
then $F^\mathrm{res} = K[y]/(\res f)$ with
$\res f = (y - \rho_1)(y - \rho_2)(y - \rho_3)(y - \rho_4)(y - \rho_5)(y - \rho_6)$,
where 
\[ \rho_1 = (r_1r_2 + r_2r_3 + r_3r_4 + r_4r_5 + r_5r_1 - r_1r_3 - r_3r_5 - r_5r_2 - r_2r_4 - r_4r_1)^2 \]
and $\{\rho_1, \rho_2, \ldots, \rho_6\}$ is the orbit of $\rho_1$ under the natural $S_5$-action permuting the $r_i$'s.
Note that $\rho_1$ is stabilized by $H^{(1)}$. We choose
$\rho_{2+i}$ to be stabilized by the conjugate subgroup
\[
H^{(2+i)} = (12345)^{-i}\langle (13254), (3245)\rangle (12345)^{i}, \text{ for } 0\leq i\leq 4.
\]
The polynomial $\res f$ is known as \emph{Cayley's sextic resolvent}; concrete expressions for its coefficients in terms of $a,b,c,d,e$ can be found in~\cite[Proof of Prop.\,13.2.5]{cox}.\footnote{Or it can be found hard-coded in our accompanying Magma file \texttt{precomputed\_{}5.m}.}

%The corresponding resolvent field $F^H$ will be denoted by $K^\mathrm{res}$, this resolvent is also known as \emph{Cayley's sextic resolvent}. This is a sextic field over $\ff R$. We consider also the conjugate subgroups

For an element $\alpha\in F^\mathrm{res}$ we denote by $\alpha^{(i)}$ the conjugates of $\alpha$ inside $E$, labeled so that $\alpha^{(i)}$ is fixed by $H^{(i)}$. Consider bases $\alpha_0 = 1, \alpha_1, \ldots, \alpha_4$ for $S / R$ and $\beta_0 = 1, \beta_1, \ldots, \beta_5$ for $S' / R$, and define
\[ \sqrt{\disc S} = \begin{vmatrix} 
1 & 1 & \ldots & 1 \\ 
\alpha_1^{(1)} & \alpha_1^{(2)} & \ldots & \alpha_1^{(5)} \\ 
\vdots & \vdots & \ddots & \vdots \\
\alpha_4^{(1)} & \alpha_4^{(2)} & \ldots & \alpha_4^{(5)}  
\end{vmatrix}. 
%\qquad \sqrt{\disc S'} = \begin{vmatrix} 
%1 & 1 & \ldots & 1 \\ 
%\beta_1^{(1)} & \beta_1^{(2)} & \ldots & \beta_1^{(6)} \\ 
%\vdots & \vdots & \ddots & \vdots \\
%\beta_5^{(1)} & \beta_5^{(2)} & \ldots & \beta_5^{(6)}  
%\end{vmatrix} 
\]
%and let $\disc S = (\sqrt{ \disc S})^2$.
The central tool in Bhargava's correspondence is the \emph{fundamental resolvent map}, which is the bilinear alternating form
\begin{equation} \label{fundamentalmap}
g : F^\mathrm{res} \times F^\mathrm{res} \to F : (\alpha, \beta)\mapsto \sqrt{\disc S} \cdot \begin{vmatrix} 1 & 1 & 1 \\
\alpha^{(1)}+\alpha^{(2)} & \alpha^{(3)} + \alpha^{(6)} & \alpha^{(4)}+\alpha^{(5)} \\
\beta^{(1)}+\beta^{(2)} & \beta^{(3)} + \beta^{(6)} & \beta^{(4)}+\beta^{(5)}\end{vmatrix}.
\end{equation}
This turns out to descend to a well-defined map 
$\tilde{S'}\times \tilde{S'}\to \tilde{S}$, where
\[
\tilde{S} = R\alpha_1^* + R\alpha_2^* + R\alpha_3^* + R\alpha_4^*\subseteq F, \qquad \tilde{S}' = R\beta_1^* + R\beta_2^* + R\beta_3^* + R\beta_4^* + R\beta_5^* \subseteq F^\mathrm{res}
\]
are defined in terms of the dual bases $\alpha_0^*, \ldots , \alpha_4^*$ and $\beta_0^*, \ldots , \beta_5^*$ with respect to the trace pairing, i.e., $\tr_{F/K} ( \alpha_i\alpha_j^*) = \delta_{ij}$ (with $\delta_{ij}$ the Kronecker delta), and similarly for $\beta_j^*$.
Note that the extensions $F/K$ and $F^\mathrm{res}/K$ are both separable and so their trace pairings are non-degenerate.
%So when fixing a basis , there exists a unique basis $\alpha_0^*, \ldots , \alpha_4^*$ of $F / K$ for which $\tr_{F/K} ( \alpha_i\alpha_j^*) = \delta_{ij}$
%(with $\delta_{ij}$ the Kronecker delta), which we use to define the lattice
%\[
%\tilde{S} = R\alpha_1^* + R\alpha_2^* + R\alpha_3^* + R\alpha_4^*\subseteq F 
%\]
%A \emph{resolvent ring} for $S$ will be a certain sextic ring $S'$ in $K^\mathrm{res}$, see \cite[Def.\, 5]{bhargava5}. Any quintic ring has a sextic resolvent, but it is not unique in general. 
%We similarly fix a basis $\beta_0 = 1, \beta_1, \ldots, \beta_5$ for $S' / R$, along with its dual $\beta_0^*, \ldots, \beta_5^*$ and the five-dimensional lattice
%\[
%\tilde{S}' = R\beta_1^* + R\beta_2^* + R\beta_3^* + R\beta_4^* + R\beta_5^* \subseteq E^H.
%\]
%The fundamental ingredient of Bhargava's correspondence is then the bilinear alternating form \todo{check this more carefully, bij bhargava is er nog een factor 4/3: geen probleem in char 3?}
%\[
%g: \tilde{S'}\times \tilde{S'}\to \tilde{S}: (\alpha, \beta)\mapsto \sqrt{\disc S'}\begin{vmatrix} 1 & 1 & 1 \\
%\alpha^{(1)}+\alpha^{(2)} & \alpha^{(3)} + \alpha^{(6)} & \alpha^{(4)}+\alpha^{(5)} \\
%\beta^{(1)}+\beta^{(2)} & \beta^{(3)} + \beta^{(6)} & \beta^{(4)}+\beta^{(5)}\end{vmatrix}
%\]
%where 
%\[ \sqrt{\disc S'} = \det \begin{pmatrix} 
%1 & 1 & \ldots & 1 \\ 
%\beta_1^{(1)} & \beta_1^{(2)} & \ldots & \beta_1^{(6)} \\ 
%\vdots & \vdots & \ddots & \vdots \\
%\beta_5^{(1)} & \beta_5^{(2)} & \ldots & \beta_5^{(6)}  
%\end{pmatrix}. \]
With respect to the bases $\{ \beta_i^*\}_i$ and $\{ \alpha_i^* \}_i$, the map
$g$ is represented by a quadruple $M = (M_1, M_2, M_3, M_4)$ of $5\times 5$ skew-symmetric matrices. Changing bases of $\tilde{S}'$ and $\tilde{S}$ then corresponds to an element of $\GL_5(R)\times \GL_4(R)$. 
%Bhargava has proven that this correspondence is a bijection.

\begin{remark}
Our fundamental resolvent map differs from Bhargava's original map by a factor $4/3$, which is not an issue in view of our restrictions on the field characteristic.
\end{remark}

\subsection*{Lifting degree $5$ covers.} 
As in the $d=4$ case, we assume that our input polynomial $\overline{f}$ from~\eqref{eq:inputpol} is monic (i.e., $\overline{f}_5 = 1$). 
Let $\FF_q(\overline{C})$ be the corresponding function field; this is a separable $S_5$-extension of $\FF_q(x)$ because $\overline{\varphi}$ is simply branched~\cite[Lem.\,6.10]{fulton}.
We also consider Cayley's sextic resolvent associated with our input polynomial, defining $\FF_q(\overline{C}^\mathrm{res}) := \FF_q(\overline{C})^\mathrm{res}$. Let $\FF_q[\overline{C}]_0$ and $\FF_q[\overline{C}^\mathrm{res}]_0$ be the respective integral closures of $R = \FF_q[x]$ inside these two function fields; it can be argued that $\FF_q[\overline{C}^\mathrm{res}]_0$ is the unique sextic resolvent ring $S'$ for $S = \FF_q[\overline{C}]_0$, but as in the $d=4$ case it suffices to observe that $S' \subseteq \FF_q[\overline{C}^\mathrm{res}]_0$. 

Let $e_1, e_2, e_3, e_4$ be the Maroni invariants of $\overline{C}$ with respect to $\overline{\varphi}$, and let $b_1, b_2, b_3, b_4, b_5$ be its Schreyer invariants.
Take reduced $\FF_q[x]$-bases $1, \alpha_1, \ldots, \alpha_4 \in \FF_q[\overline{C}]_0$ and $1, \beta_1, \ldots, \beta_5 \in \FF_q[\overline{C}^\mathrm{res}]_0$ and consider the quadruple $(\overline{M}_1, \overline{M}_2, \overline{M}_3, \overline{M}_4)$ of $5\times 5$ skew-symmetric matrices over $\FF_q[x]$ arising along the above construction.
%Let $C\to \PP^1$ be a simply branched pentagonal curve over a field $k$ of characteristic not $2$ or $3$. The field $k(C)^\mathrm{res}$ is a sextic extension of $k(x)$ and so corresponds to a unique smooth projective curve, which we denote by $C^\mathrm{res}$ and call the \emph{resolvent curve}. This curve naturally comes equipped with a degree $6$ map $C^\mathrm{res}\to \PP^1$ coming from the embedding $k(x)\subseteq k(C^\mathrm{res})$. Taking reduced bases for both $k(C)$ and $k(C^\mathrm{res})$ the form $g$ is represented by a quadruple $(A_1, A_2, A_3, A_4)$ of $5\times 5$ skew-symmetric matrices over $k[x]$. 
We represent this by the single matrix 
\[
\overline{M} = \overline{M}_1Y_1+\overline{M}_2Y_2 + \overline{M}_3Y_3 + \overline{M}_4Y_4\in k[x][Y_1, Y_2, Y_3, Y_4]
\]
whose entries are now linear and homogeneous in the $Y_i$. 
To get a handle on the degrees of their coefficients, we should again express the Maroni invariants of the resolvent curve $\overline{C}^\mathrm{res}$ in terms of data associated with $\overline{C}$. As in the case of the cubic resolvent, this can be done in a surprisingly explicit way:
\begin{theorem}\label{thm: maroni invariants of sextic resolvent}
Let $k$ be a field of characteristic $\neq 2$ and consider a smooth projective curve over $k$ equipped with a simply branched degree $5$ morphism to $\PP^1$, say with Schreyer invariants $b_1, \ldots, b_5$. Then the Maroni invariants of its sextic resolvent are $g-2-b_5, \ldots , g-2-b_1$.
\end{theorem}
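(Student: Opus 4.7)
\emph{Proof plan.} The plan is to mimic the approach behind Casnati's theorem for the tetragonal case (Theorem~\ref{thm: maroni invariants of resolvent}), with Bhargava's sextic resolvent playing the role of Lagrange's cubic resolvent: globalise Bhargava's local parametrisation to the base $\PP^1$ and then read off the Maroni invariants of $\overline{C}^\mathrm{res}$ from the splitting type of the pushforward sheaf $\mathcal{F} := \varphi^\mathrm{res}_\ast \mathcal{O}_{\overline{C}^\mathrm{res}}$.

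First, by the discussion in Section~\ref{sec:preliminaries}, $\overline{C}$ sits inside the rational normal scroll associated with $\varphi$ as the common vanishing locus of the $4\times 4$ sub-Pfaffians of a skew-symmetric $5\times 5$ matrix $M$ with entries~\eqref{liftable5}, the $(i,j)$-entry of which has ruling degree $b_i+b_j+6-g$. Globalising, $M$ is a section of $\mathcal{E} \otimes \bigwedge^2 \mathcal{G}^\vee$ on $\PP^1$ with $\mathcal{E} = \bigoplus_r \mathcal{O}_{\PP^1}(e_r)$ (the bundle defining the scroll) and $\mathcal{G} = \bigoplus_i \mathcal{O}_{\PP^1}(b_i+c)$ for a fixed twist $c$ absorbing the constant $6-g$. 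In parallel, I would verify that $\mathcal{F}$ is, as a sheaf of $\mathcal{O}_{\PP^1}$-algebras, precisely the Bhargava sextic resolvent of $\varphi_\ast \mathcal{O}_{\overline{C}}$; this is a Zariski-local question, and each stalk of $\varphi_\ast \mathcal{O}_{\overline{C}}$ is a maximal quintic ring over the local PID $\mathcal{O}_{\PP^1,P}$ (since $\overline{C}$ is smooth), so uniqueness of the sextic resolvent for maximal quintic rings~\cite[Cor.\,19]{bhargava5} ensures that the local resolvent matches $\mathcal{F}_P$. By Grothendieck's theorem, $\mathcal{F} \cong \mathcal{O}_{\PP^1} \oplus \bigoplus_{j=1}^5 \mathcal{O}_{\PP^1}(r'_j)$ for some integers $r'_j$ that I want to determine.

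Next, I would globalise the fundamental resolvent map~\eqref{fundamentalmap} to an alternating $\mathcal{O}_{\PP^1}$-bilinear pairing
\[ \bigwedge^2 \mathcal{F}^\vee \longrightarrow \varphi_\ast \mathcal{O}_{\overline{C}} \otimes \mathcal{L} \]
for a line bundle $\mathcal{L}$ on $\PP^1$ recording the $\sqrt{\disc S}$-factor (a suitable power of $\det \mathcal{E}$). Matching this pairing with the matrix $M$ from the first step identifies $\mathcal{F}^\vee$, up to the twist by $\mathcal{L}$, with the bundle $\mathcal{G}$, which yields $r'_j = -g + b_{6-j}$ and therefore $e'_j = -r'_j - 2 = g-2-b_{6-j}$ as claimed.

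The main obstacle is pinning down the two twists $\mathcal{L}$ and $c$ correctly; this is the delicate bookkeeping step of the whole argument. As an independent sanity check that rules out sign errors in the final shift, I would first verify the numerical identity $\sum_j (g-2-b_{6-j}) = 5(g-2) - (2g-12) = 3g+2$, which must agree with $g^\mathrm{res} - 6 + 1$, where $g^\mathrm{res} = 3g+7$ follows from Riemann--Hurwitz applied to $\varphi^\mathrm{res}$: a transposition in $S_5$ acts without fixed cosets on the six cosets of $H = \operatorname{AGL}_1(\FF_5)$ (indeed, $H$ contains no transpositions, its involutions being of cycle type $(2,2,1)$), so each such transposition permutes those cosets as a product of three disjoint transpositions; hence each of the $2g+8$ simple branch points of $\varphi$ contributes exactly three simple ramification points to $\varphi^\mathrm{res}$. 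If the globalisation of Bhargava's correspondence turns out to be intricate, one can alternatively argue purely locally at $0$ and at $\infty$, imitating the proof of Lemma~\ref{lem:liftablequadrics} by writing the fundamental resolvent map in reduced bases and tracking the $x^{-e_i-2}$ and $x^{-e'_j-2}$ rescalings directly.
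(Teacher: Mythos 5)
The paper does not actually prove this theorem: it cites the second author's master thesis~\cite{vermeulen} and the forthcoming joint paper~\cite{resolvent_project}, so there is no in-text argument against which to compare your proposal directly. That said, your strategy --- globalising Bhargava's quintic parametrisation over $\PP^1$, identifying the pushforward $\varphi^\mathrm{res}_\ast \mathcal{O}_{\overline{C}^\mathrm{res}}$ with the Bhargava resolvent sheaf of $\varphi_\ast \mathcal{O}_{\overline{C}}$ via uniqueness of sextic resolvents for maximal quintic rings, and then reading off the Maroni invariants from the splitting type of the bundle $\mathcal{G}$ controlling the Schreyer model --- is the natural analogue of Casnati's argument behind Theorem~\ref{thm: maroni invariants of resolvent}, and is almost certainly the kind of route taken in~\cite{vermeulen}. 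Your Riemann--Hurwitz consistency check is also correct: since $\operatorname{AGL}_1(\FF_5)$ contains no transpositions (its involutions having cycle type $(2,2,1)$), a transposition in $S_5$ acts on the six cosets of $H$ as a product of three disjoint transpositions, so each of the $2g+8$ simple branch points of $\varphi$ contributes three simple ramification points to $\varphi^\mathrm{res}$, giving $g^\mathrm{res} = 3g+7$, which matches $\sum_j e'_j = g^\mathrm{res} - 5 = 3g+2 = \sum_j (g-2-b_j)$.

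The genuine gap is that the proposal stops exactly at the step you yourself call ``the delicate bookkeeping step'': the identification of $\mathcal{F}^\vee$ with $\mathcal{G}$ up to a twist, and the determination of that twist (the line bundle $\mathcal{L}$ encoding $\sqrt{\disc}$ and the constant $c$ absorbing $6-g$). All the numerical content of the theorem --- the precise shift $g-2-b_{6-j}$, including its sign --- lives in that step, and you do not carry it out; the Riemann--Hurwitz check only pins down $\sum_j e'_j$, not the individual invariants, and would equally well be satisfied by any permutation or reshuffling summing to $3g+2$. Moreover, the alternative you suggest --- arguing locally at $0$ and $\infty$ by imitating the proof of Lemma~\ref{lem:liftablequadrics} and its degree-$5$ analogue --- needs care to avoid circularity: in the paper, that lemma \emph{deduces} the degree bounds on the entries $\overline{M}_{r,i,j}$ from Theorem~\ref{thm: maroni invariants of sextic resolvent}, not the reverse. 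To run the logic the other way, you must first establish an independent a priori bound on those entries from the Schreyer embedding (i.e., from the existence of a skew-symmetric matrix with prescribed ruling degrees $-(b_i+b_j+6-g)$ cutting out $\overline{C}$ in its scroll), and then compare against the matrix produced by Bhargava's fundamental resolvent map in reduced bases to solve for the $e'_j$. Neither the a priori bound nor that comparison is supplied, so what you have is a plausible plan rather than a proof.
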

\begin{proof}
This theorem seems new and is part of a generalization of 
Theorem~\ref{thm: maroni invariants of resolvent}, which is currently being elaborated in collaboration with Yongqiang Zhao~\cite{resolvent_project}. In the meantime, a proof of Theorem~\ref{thm: maroni invariants of sextic resolvent} can be found in the master thesis of the second listed author~\cite{vermeulen}.
\end{proof}

%The idea is that this is a model for $C$ as described in \todo{refereere naar intro ergens}. To make this precise, we need to have an analogue of Casnati's theorem, describing the Maroni invariants of the resolvent curve $C^\mathrm{res}$.

%\paragraph{Lifting pentagonal curves.} We now specialize the above to $k=\FF_q$. So let $\overline{C}\to \PP^1$ be a simply branched pentagonal curve over $\FF_q$. Take reduced bases $1, \alpha_1, ..., \alpha_4$ for $\FF_q[\overline{C}]_0$ and $1, \beta_1, ..., \beta_5$ for $\FF_q[\overline{C}^\mathrm{res}]_0$. With respect to the dual bases $\alpha_1^*, ..., \alpha_4^*$ and $\beta_1^*, ..., \beta_5^*$ the form $g$ is represented by a $5\times 5$ skew-symmetric matrix $A$ whose entries are polynomials in $k[x][y_1, y_2, y_3, y_4]$ linear and homogeneous in the $y_i$. We consider instead the dehomogenization to $y_4=1$ of $A$.

\begin{lemma}
%After homogenizing its coefficients, the entry of $\overline{M}$ in row $i$ and column $j \neq i$ is a polynomial of the form~\eqref{eq:homdef} with $m=1$ and $b = - (b_i + b_j + 6 - g)$. In particular, the five $4 \times 4$ sub-Pfaffians define a non-singular model of $\overline{C}$ in $\PP(\mathcal{O}_{\PP^1}(e_1) \oplus \ldots \oplus  \mathcal{O}_{\PP^1}(e_4))$.
Denote by $\overline{M}_{r,i,j}$ the $(i,j)$-th entry of the matrix $\overline{M}_r$ constructed through Bhargava's correspondence as above. Then $\deg \overline{M}_{r,i,j} \leq e_r+b_i+b_j +6-g$. In particular, this defines a model for $\overline{C}$ of the form~\eqref{liftable5}.
%Let $M$ be a naive skew-symmetric lift of $\overline{M}$ to $\mathcal{O}_K$. Then the five $4\times 4$ sub-Pfaffians of $M$ define a curve $C$ inside $\AA^4$ lifting $\overline{C}$ and of genus $g$.
\end{lemma}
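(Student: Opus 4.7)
The plan is to mimic the proof of Lemma~\ref{lem:liftablequadrics}: compute the matrix of the fundamental resolvent map in the infinity bases of $\FF_q[\overline{C}]_\infty$ and $\FF_q[\overline{C}^\mathrm{res}]_\infty$, and turn the requirement that this matrix have entries in $\FF_q[x^{-1}]$ into a degree bound on the $\overline{M}_{r,i,j}$.

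The subtlety absent from the $d=4$ case is that the fundamental resolvent map depends on the chosen basis of $S = \FF_q[\overline{C}]_0$ through the factor $\sqrt{\disc S}$. Switching from the reduced 0-basis $1,\alpha_1,\ldots,\alpha_4$ to the associated infinity basis $1,x^{-e_1-2}\alpha_1,\ldots,x^{-e_4-2}\alpha_4$ rescales this Vandermonde-type determinant by $\prod_{r=1}^{4}x^{-(e_r+2)}=x^{-(g+4)}$, using $\sum_r e_r = g-4$. The resulting map, coming from Bhargava's correspondence applied over the PID $\FF_q[x^{-1}]$ with the quintic ring $\FF_q[\overline{C}]_\infty$ and its (maximal, hence unique) sextic resolvent $\FF_q[\overline{C}^\mathrm{res}]_\infty$, therefore equals $x^{-(g+4)} g$ as a function $F^\mathrm{res}\times F^\mathrm{res}\to F$, and by construction descends to a bilinear form $\tilde{S'}_\infty\times \tilde{S'}_\infty \to \tilde{S}_\infty$, the infinity analogues of the modules used to define $\overline{M}$.

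Using Theorem~\ref{thm: maroni invariants of sextic resolvent}, after an appropriate permutation we may assume that $\beta_i$ corresponds to the Maroni invariant $g-2-b_i$ of the sextic resolvent, so that $1, x^{b_i-g}\beta_i$ (for $i=1,\ldots,5$) is an $\FF_q[x^{-1}]$-basis of $\FF_q[\overline{C}^\mathrm{res}]_\infty$. Taking trace duals, $\tilde{S}_\infty$ has basis $\{x^{e_r+2}\alpha_r^*\}_{r=1}^{4}$ and $\tilde{S'}_\infty$ has basis $\{x^{g-b_i}\beta_i^*\}_{i=1}^{5}$. A direct expansion using $\FF_q(x)$-bilinearity gives
\[ x^{-(g+4)}\,g\!\left(x^{g-b_i}\beta_i^*,\,x^{g-b_j}\beta_j^*\right) = \sum_{r=1}^{4} x^{g-b_i-b_j-e_r-6}\,\overline{M}_{r,i,j}\cdot \bigl(x^{e_r+2}\alpha_r^*\bigr). \]
Since the left-hand side lies in $\tilde{S}_\infty$, each scalar $x^{g-b_i-b_j-e_r-6}\overline{M}_{r,i,j}$ must lie in $\FF_q[x^{-1}]$, which is precisely $\deg \overline{M}_{r,i,j}\leq e_r+b_i+b_j+6-g$. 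The fact that $\overline{M}$ thereby provides a model of the form~\eqref{liftable5} then follows from the identification of the $4\times 4$ sub-Pfaffians of $\overline{M}$ with defining equations for $\overline{C}$, which is built into the Bhargava correspondence.

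The main obstacle is the basis-dependence of $g$ via $\sqrt{\disc S}$: one has to recognize the 0-version and the infinity-version as two distinct incarnations of the fundamental resolvent map, related by the explicit scalar $x^{-(g+4)}$. Once this is unpacked — and Theorem~\ref{thm: maroni invariants of sextic resolvent} is used to locate the Maroni invariants of the sextic resolvent — the degree bound drops out.
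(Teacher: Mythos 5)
Your proof is correct and follows essentially the same route as the paper: both compare $g_0$ with $g_\infty$ via the scalar $x^{\pm(g+4)}$, pass to the trace-dual bases, use Theorem~\ref{thm: maroni invariants of sextic resolvent} to identify the infinity basis of the resolvent ring, and extract the degree bound from membership in $\FF_q[x^{-1}]$. The only cosmetic difference is that you derive the scalar $x^{-(g+4)}$ directly from the Vandermonde rescaling, whereas the paper writes it as a ratio of discriminants; these are the same computation.
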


\begin{proof}
The fact that the sub-Pfaffians of $\overline{M}$ cut out a curve birational to $\overline{C}$ follows again from~\cite[\S2]{bhargava5}. 
%\todo{mss nog wat detailleren nadat we geval kar $3$ beter begrijpen}
As for the claim on the degrees, we apply the same proof strategy as in the degree $4$ case. Denote by $\FF_q[\overline{C}]_\infty$ the integral closure of $\FF_q[x^{-1}]$ in $\FF_q(\overline{C})$. Let $g_0$ be the fundamental resolvent form attached to the basis $1, \alpha_1, \ldots, \alpha_4$ of $\FF_q[\overline{C}]_0$ over $\FF_q[x]$, and let $g_\infty$ be the fundamental resolvent form attached to the basis $1, x^{-e_1-2}\alpha_1, \ldots, x^{-e_4-2}\alpha_4$ of $\FF_q[\overline{C}]_\infty$ over $\FF_q[x^{-1}]$. 
%Note that we can naturally extend both these maps to bilinear alternating functions $\FF_q(\overline{C}^\mathrm{res})\times \FF_q(\overline{C}^\mathrm{res})\to \FF_q(\overline{C})$. 
We have that, for all $u,v\in \FF_q(\overline{C}^\mathrm{res})$,
\[
g_0(u,v) = \frac{\sqrt{\disc \FF_q[\overline{C}]_0}}{\sqrt{\disc \FF_q[\overline{C}]_\infty}} g_\infty(u,v) = x^{g+4}g_\infty(u,v).
\]
%A straightforward computation using reduced bases shows that $\sqrt{\disc \FF_q[\overline{C}]_0 / \disc \FF_q[\overline{C}]_\infty}$ is nothing but $x^{g+4}$.
Let $\alpha_0^*, \ldots, \alpha_4^*$, resp.\ $\beta_0^*, \ldots, \beta_5^*$, be dual bases for $1, \alpha_1, \ldots, \alpha_4$, resp.\ $1, \beta_1, \ldots,\beta_5$. Then the corresponding dual bases for the rings $\FF_q[\overline{C}]_\infty$ and $\FF_q[\overline{C}^\mathrm{res}]_\infty$ are
\begin{align*}
\alpha_0^*, x^{e_1+2}\alpha_1^*, \ldots, x^{e_4+2}\alpha_4^* \text{ for } \FF_q[\overline{C}]_\infty, \\
\beta_0^*, x^{e_1'+2}\beta_1^*, \ldots, x^{e_5'+2}\beta_5^* \text{ for } \FF_q[\overline{C}^\mathrm{res}]_\infty,
\end{align*}
where the $e_i'$ are the Maroni invariants of the resolvent. We now compute, for $i,j>0$,
\begin{align}
g_\infty(x^{e_i'+2}\beta_i^*, x^{e_j+2}\beta_j^*) &= x^{e_i'+e_j'+4}x^{-g-4}g_0(\beta_i^*, \beta_j^*) \\ 
&= \sum_{l=1}^4 x^{-e_l-g-2+e_i'+e_j'}(\overline{M}_l)_{ij}(x^{e_l+2}\alpha_l^*).
\end{align}
It follows that $g_\infty$ is represented by the matrix whose entries have coefficients 
\[
x^{-e_l-g-2+e_i'+e_j'}(\overline{M}_l)_{ij},\quad i,j=1, \ldots, 5, \ l=1, \ldots, 4.
\]
But these coefficients belong to $\FF_q[x^{-1}]$. Hence we find that
$\deg (\overline{M}_l)_{ij} \leq e_l+b_i+b_j+6 - g$
by Theorem~\ref{thm: maroni invariants of sextic resolvent}, as wanted.
%We are not aware of any analogue of Baker's bound applicable in this situation to conclude that $C$ has genus $g$. \todo{volgt ook via schreyer? betere verwijzing} Instead one can rely on Bhargava's work~\cite{bhargava5} in combination with the Riemann--Hurwitz formula.
\end{proof}

To compute such a liftable matrix in practice, we follow a similar approach as in the case of degree $4$ covers. Namely, we will not be computing the fundamental resolvent map with respect to our reduced bases directly, but rather compute this for certain naive bases and apply change of basis. 
%Suppose that our input polynomial is monic. There is an explicit polynomial defining the sextic resolvent field, and we denote by $\overline{f}_i'$ its coefficients. 
%As before, we consider $\FF_q(\overline{C})$ as the field of fractions of 
%\[
%\frac{\FF_q[x][y]}{(y^5+ \overline{f}_4y^4+\overline{f}_3y^3+\overline{f}_2y^2 +\overline{f}_1y+\overline{f}_0)}
%\]
%and similarly for $\FF_q(\overline{C}^\mathrm{res})$. 
Concretely, consider the naive bases
\begin{align*}
1, y, y^2, y^3, y^4 & \text{ for } \FF_q(\overline{C}), \text{ and }\\
1, y, y^2, y^3, y^4, y^5 & \text{ for } \FF_q(\overline{C}^\mathrm{res}),
\end{align*}
along with the slightly altered fundamental resolvent map 
\[
g': \FF_q(\overline{C}^\mathrm{res})\times \FF_q(\overline{C}^\mathrm{res})\to \FF_q(\overline{C}): (\alpha, \beta)\mapsto \sqrt{\disc \overline{f}} \cdot \begin{vmatrix} 1 & 1 & 1 \\
\alpha^{(1)}+\alpha^{(2)} & \alpha^{(3)} + \alpha^{(6)} & \alpha^{(4)}+\alpha^{(5)} \\
\beta^{(1)}+\beta^{(2)} & \beta^{(3)} + \beta^{(6)} & \beta^{(4)}+\beta^{(5)}\end{vmatrix}
\]
where $\sqrt{\disc \overline{f}} = \det ((y^i)^{(j)})_{0 \leq i \leq 4, 1 \leq j \leq 5}$.
%Note that this resolvent map makes sense on all of $\FF_q(\overline{C}^\mathrm{res})$ and not just on $\overline{S}'$ as defined above. 
We compute the $\overline{M}_{ij}'^{(r)}\in \FF_q[x]$ for which
\[
g'(y^i, y^j) = \sum_{r=0}^4 \overline{M}'^{(r)}_{ij}y^r,
\]
giving five $5\times 5$ skew-symmetric matrices $\overline{M}'^{(0)}, \ldots, \overline{M}'^{(4)}$; here we used that $\overline{M}'^{(r)}_{ij}=0$ as soon as $i$ or $j$ is zero, allowing us to disregard these terms. We call this the \emph{naive model}.

\begin{remark} It is important to note that these expressions can be computed symbolically in terms of the coefficients $\overline{f}_i$ of $\overline{f}$, by means of Vieta's formulas. Therefore this computation only has to be done once for all curves. This is in complete analogy with the degree $4$ case, see~\eqref{vietaquadric}. However, there the naive model was very simple, whereas this time the expressions involved are rather long. However, a computer has no trouble with these computations.
%The reason for working with $g'$ instead of $g$ is that this expression can be computed symbolically in terms of the coefficients of $f$.
\end{remark}
Now compute reduced bases
$1, \alpha_1, \ldots, \alpha_4$ for $\FF_q[\overline{C}]_0$ and
$1, \beta_1, \ldots, \beta_5$ for $\FF_q[\overline{C}^\mathrm{res}]_0$
along with their corresponding dual bases. Acting on the naive model with a change of basis from the naive bases to the duals of these reduced bases, yields the altered resolvent map $g'$ with respect to these dual reduced bases. Note that this action will be by an element of $\GL_5(\FF_q(x))\times \GL_4(\FF_q(x))$ rather than $\GL_5(\FF_q[x])\times \GL_4(\FF_q[x])$. To obtain instead the resolvent map $g$ we have to multiply by 
\[
\frac{\sqrt{\disc \FF_q[\overline{C}]_0}}{\sqrt{\disc \overline{f}}}.
\]
Since we already have the reduced bases at hand, this factor is easiest to compute as the determinant of the change of basis matrix from the naive basis for $\FF_q(\overline{C})$ to the reduced basis $1, \alpha_1, \ldots, \alpha_4$.

At this point, we have a representation of the fundamental resolvent map $g$ with respect to the duals of the reduced bases for $\FF_q[\overline{C}]_0$ and $\FF_q[\overline{C}^\mathrm{res}]_0$ as a $5\times 5$ skew-symmetric matrix $\overline{M}$ with entries in $k[x][Y_1, Y_2, Y_3, Y_4]$, linear and homogeneous in the $Y_i$. This is the desired model, which we can lift naively, in a skew-symmetry preserving way, to a matrix having entries in $\mathcal{O}_K[x][Y_1, Y_2, Y_3, Y_4]$. Computing its five $4 \times 4$ sub-Pfaffians, dehomogenizing, and then eliminating variables finally returns our output~\eqref{eq:liftedpol}, ready to be fed as input to Tuitman's algorithm.

\small

\vspace{0.1cm}

\noindent \textsc{Cosic, research group at imec and KU Leuven}

\noindent \textsc{Kasteelpark Arenberg 10/2452, 3001 Leuven (Heverlee), Belgium}

\noindent \vspace{-0.35cm}

\noindent \textsc{Department of Mathematics: Algebra and Geometry, Ghent University}

\noindent \textsc{Krijgslaan 281 -- S25, 9000 Gent, Belgium}

\vspace{0.2cm}

\noindent \textsc{Section of Algebra, Department of Mathematics, KU Leuven}

\noindent \textsc{Celestijnenlaan 200B, 3001 Leuven (Heverlee), Belgium}

\noindent \vspace{-0.2cm}

\noindent \texttt{firstname.lastname@kuleuven.be}

\appendix

\section{Magma implementation: discussion and examples} \label{app:report}

The approximate timings mentioned below were obtained using Magma V2.25-2 on \texttt{kraitchik}, a computer with $12$ Intel Xeon E5-2630 v2 processors and $128$GB of memory, running Ubuntu 16.04.

%For our own convenience, the timings in this section were obtained using Tuit\-man's implementation from~\cite{tuitman2}, which starts off by computing its own integral bases through the Magma intrinsic, hoping that these meet the algorithm's requirements (which they usually do). By instead using the integral bases coming from our lifting step, this computation can in fact be skipped. This would not lead to a significant speed-up, but it would get rid of potential failures.

\subsection*{Degree $d=3$.} In the accompanying Magma file \texttt{lifting\_{}lowgonal\_{}3.m}, the user can choose a finite field $\FF_q$ of characteristic $p > 2$, along with a suitable pair of integers $e_1$ and $e_2$. Running the code 
\begin{itemize} 
\item first generates a random degree $3$ cover $\overline{C} \to \PP^1$ over $\FF_q$ whose Maroni invariants are $e_1, e_2$, of which it chooses a somewhat scrambled defining polynomial having the form~\eqref{eq:inputpol}; this serves as test input for our lifting procedure,
\item next applies the Delone--Faddeev correspondence to this input, thereby procuding a naively liftable defining polynomial, as discussed in Section~\ref{sec:trigonal},
\item finally carries out the naive lift and, after making the result monic, prints it to a file \texttt{inputcurve\_{}3.m}, which can be loaded as input to Tuitman's \texttt{pcc\_{}p.m} or \texttt{pcc\_{}q.m} implementation.
\end{itemize}
E.g., over $\FF_{11}$, a run of our code generated the random trigonal curve
\begin{multline*}
\scriptstyle
 (6x^2 + 7)y^3 + (7x^{12} + 10x^{10} + 2x^4 + 3x^3 + 8x^2 + 7x + 1)y^2 + \\
 \scriptstyle
    (7x^{22} + 10x^{20} + 4x^{14} + 6x^{13} + 5x^{12} + 3x^{11} + 2x^{10} + 4x^6 +
    x^5 + x^4 + 8x^3 + 8x + 3)y + \\
\scriptstyle
    6x^{32} + 7x^{30} + 2x^{24} + 3x^{23} +
    8x^{22} + 7x^{21} + x^{20} + 4x^{16} + x^{15} + x^{14} + 8x^{13} \\ 
\scriptstyle + 8x^{11} + 3x^{10} 
    + 10x^8 + 4x^7 + 7x^6 + 2x^5 + 6x^4 + x^3 + 9x^2 + 3x + 5 = 0
\end{multline*}
of genus $8$, having prescribed Maroni invariants $\{2, 4\}$. Under the Delone--Faddeev correspondence this was transformed into
\begin{multline*}
\scriptstyle    (10x^2 + 8)y^3 + (8x^4 + x^3 + 10x^2 + 7x + 1)y^2 + (9x^6 + 5x^5 +
    3x^4 + 2x^3 + 4x^2 + 7x + 9)y \\ 
\scriptstyle    + x^8 + 4x^7 + 5x^6 + x^5 + 4x^4 +
    9x^3 + 6x^2 + 9 = 0.
\end{multline*}
After taking a naive lift having coefficients in $\{-5, \ldots, 5\} \subseteq \ZZ$ and making the result monic in $y$, 
this was fed to Tuitman's code, which determined the numerator of the Hasse--Weil zeta function as
\begin{multline*}
\scriptstyle
214358881T^{16} - 38974342T^{15} + 30116537T^{14} - 4509428T^{13} + 2459688T^{12}
    - 505780T^{11} + 151855T^{10} \\ 
\scriptstyle - 59070T^9 + 8366T^8 - 5370T^7 + 1255T^6
    - 380T^5 + 168T^4 - 28T^3 + 17T^2 - 2T + 1.
\end{multline*}
On a larger scale, for a random trigonal genus $9$ curve over $\FF_{5^9}$ having Maroni invariants $\{3, 4\}$, the same procedure computed its Hasse--Weil zeta function in about $20$ minutes. For a random trigonal genus $8$ curve over $\FF_{7^{12}}$ having Maroni invariants $\{3,3\}$ we obtained its Hasse--Weil zeta function using roughly $2$ hours of computation. In both cases, the lifting step took less than $0.1$ seconds.

\subsection*{Degree $d=4$.} In the accompanying Magma file \texttt{lifting\_{}lowgonal\_{}4.m}, the user chooses a finite field $\FF_q$ of characteristic $p > 2$, along with a suitable quintuple of integers $e_1, e_2, e_3$, $b_1, b_2$. Running the code 
\begin{itemize} 
\item first generates a random degree $4$ cover $\overline{C} \to \PP^1$ over $\FF_q$ with Maroni invariants $e_1, e_2, e_3$ and Schreyer invariants $b_1, b_2$, of which it chooses a somewhat scrambled monic defining polynomial; this serves as test input for our lifting procedure,
\item next applies the Bhargava correspondence to this input, thereby procuding a naively liftable pair of quadratic forms (i.e., symmetric matrices in $\FF_q[x]^{3 \times 3}$), as discussed in Section~\ref{sec:tetragonal},
\item finally carries out the naive lift and, after taking a resultant and making the outcome monic, prints it to a file \texttt{inputcurve\_{}4.m}, which can be loaded as input to Tuitman's \texttt{pcc\_{}p.m} or \texttt{pcc\_{}q.m} implementation.
\end{itemize}
E.g., over $\FF_{7}$ a run of our code generated the random tetragonal curve
\begin{multline*}
\scriptstyle y^4 + (4x^{10} + 6x^4 + 2x^3 + 3x^2 + 5x + 6)y^3 \\ 
\scriptstyle + (6x^{20} + 4x^{14} +
    6x^{13} + 2x^{12} + x^{11} + 4x^{10} + 4x^8 + 2x^6 + x^4 + x^3 + 6x^2)y^2 \\
\scriptstyle    + (4x^{30} + 4x^{24} + 6x^{23}  + 2x^{22} + x^{21} + 4x^{20} + x^{18} + 4x^{16} +
    2x^{14} + 2x^{13} \\ 
\scriptstyle + 5x^{12} + 6x^{11} + 4x^9 + 6x^7 + x^6 + 5x^5 + 6x^4 +
    6x^3 + 2x^2 + 5x + 5)y \\ 
\scriptstyle + x^{40} + 6x^{34} + 2x^{33} + 3x^{32} + 5x^{31} +
    6x^{30} + 4x^{28} + 2x^{26} + x^{24} + x^{23} + 6x^{22} + 6x^{21} + 4x^{19} +
    6x^{17} + x^{16} \\ 
\scriptstyle + 5x^{15} + 6x^{14} + 3x^{13} + 5x^{12} + 4x^{11} + x^{10} + 2x^9
    + x^8 + 5x^7 + x^6 + 3x^5 + 2x^4 + x^3 + 3x^2 + x + 2
\end{multline*}
of genus $10$, having Maroni invariants $\{1, 2, 4\}$ and Schreyer invariants $\{2,3\}$. Bhargava's correspondence then produced the pair of matrices
\begin{multline*}
 \left( \begin{smallmatrix} 5 &  4x + 4 &  2x^3 + 5x^2 + 4x + 5 \\
4x + 4  & 2x^2 + 6x + 2 &  6x^3 + 6x^2 + 5 \\
2x^3 + 5x^2 + 4x + 5 &  6x^3 + 6x^2 + 5 &  5x^6 + 3x^5 + 6x^4 + 2x^3
    + 2x + 1 \end{smallmatrix} \right),  \\
    \scriptstyle
\left( \begin{smallmatrix} 0 &  0 &  5x^2 + 3x + 2 \\
   0  & 1 &  2x^3 + x^2 + 6x + 4 \\
   5x^2 + 3x + 2 &  2x^3 + x^2 + 6x + 4  & 3x^5 + 6x^4 + 4x^3 + 6x^2 + 4 \\ 
\end{smallmatrix} \right).
\end{multline*}
These matrices were then lifted naively to characteristic zero, i.e., to matrices over $\ZZ[x]$ whose entries have coefficients in $\{-3, \ldots, 3\}$. After taking a resultant of the corresponding (dehomogenized) quadratic forms and making the result monic, 
we obtained a polynomial of the form~\eqref{liftable4} which was fed as input to Tuitman's code. The numerator of the Hasse--Weil zeta function was then determined as
\begin{multline*}
\scriptstyle  282475249T^{20} + 161414428T^{19} + 80707214T^{18} + 24706290T^{17} +
    5764801T^{16} - 1092455T^{15} \\ 
\scriptstyle - 1114064T^{14} - 546399T^{13} - 148323T^{12} -
    45689T^{11} - 11976T^{10} - 6527T^9 - 3027T^8 - 1593T^7 \\ 
\scriptstyle - 464T^6 -
    65T^5 + 49T^4 + 30T^3 + 14T^2 + 4T + 1. 
\end{multline*}
On a larger scale,
for a random tetragonal genus $8$ curve over $\FF_{13^{6}}$ having Maroni invariants $\{1,2,2\}$ and Schreyer invariants $\{1,2\}$ we obtained its Hasse--Weil zeta function using about $1$ hour of computation.  For a random tetragonal genus $7$ curve over $\FF_{3^{16}}$ with Maroni invariants $\{1, 1, 2\}$ and Schreyer invariants $\{0,2\}$ we computed its zeta function in roughly $9$ hours. 
In both cases the lifting step took less than five seconds, of which the lion's share was accounted for by the resultant computation.

\subsection*{Degree $d=5$.} The accompanying Magma file \texttt{precomputed\_{}5.m}, which can be reproduced by running \texttt{precomputation\_{}5.m}, contains hard-coded expressions for Cayley's sextic resolvent and for the altered fundamental resolvent map $g'$ from Section~\ref{sec:pentagonal}. It is invoked by the file \texttt{lifting\_{}lowgonal\_{}5.m}, in which the user chooses a finite field $\FF_q$ of characteristic $p > 3$, along with a suitable sequence of nine integers $e_1, e_2, e_3, e_4$, $b_1, b_2, b_3, b_4, b_5$. Running the code
\begin{itemize} 
\item first generates a random degree $5$ cover $\overline{C} \to \PP^1$ over $\FF_q$ with Maroni invariants $e_1, e_2, e_3, e_4$ and Schreyer invariants $b_1, b_2, b_3, b_4, b_5$, of which it chooses a somewhat scrambled monic defining polynomial; this serves as test input for our lifting procedure,
\item next applies the Bhargava correspondence to this input, thereby procuding a quadruple of skew-symmetric matrices in $\FF_q[x]^{5 \times 5}$, as discussed in Section~\ref{sec:pentagonal},
\item finally naively lifts these matrices to characteristic zero, after which it considers their linear combination with coefficients $1, y_1, y_2, y_3$; then it takes the five $4 \times 4$ sub-Pfaffians of this linear combination which, after eliminating the variables $y_2, y_3$ and making the result monic in $y = y_1$, gives rise to a lift of the form~\eqref{eq:liftedpol}; this polynomial is then printed to a file \texttt{inputcurve\_{}5.m}, which can be loaded as input to Tuitman's \texttt{pcc\_{}p.m} or \texttt{pcc\_{}q.m} implementation.
\end{itemize}
E.g., over $\FF_{17}$, a run of our code generated the random pentagonal curve
\begin{multline*}
\scriptstyle y^5 + (5x^{10} + x^7 + 7x^6 + 8x^5 + 6x^4 + 12x^3 + 10x^2 + 6x + 11)y^4 \\ 
\scriptstyle + (10x^{20} + 4x^{17} + 11x^{16} + 15x^{15} + 14x^{14} + 8x^{13}
    + 16x^{12} + 5x^{11} + 3x^{10} \\ 
    \scriptstyle + 3x^9 + 7x^8 + 11x^7 + 15x^5 + 16x^4 + 4x^3 + 12x^2 + 8x + 9)y^3 \\ 
    \scriptstyle + (10x^{30} + 6x^{27} + 8x^{26} 
    + 14x^{25} + 6x^{24} + 3x^{23} + 5x^{22} + 13x^{21} + 7x^{20} + 3x^{19} + 6x^{18} + 13x^{17} + 14x^{16} + 11x^{15} \\ 
    \scriptstyle + 6x^{14} + 16x^{13} + 4x^{12} 
    + 13x^{11} + 5x^{10} + 2x^8 + 16x^7 + 11x^6 + 15x^5 + 16x^4 + 6x^3 + 3x^2 + 3x + 16)y^2 \\ 
    \scriptstyle + (5x^{40} + 4x^{37} + 11x^{36} + 
    15x^{35} + 11x^{34} + 13x^{33} + 2x^{32} + x^{31} + 15x^{30} + 14x^{29} + 2x^{28} + 6x^{27} \\ 
    \scriptstyle + 4x^{26} + 4x^{24} + 9x^{23} + 5x^{22} + 13x^{21} + 
    2x^{20} + 9x^{19} + 8x^{18} + 15x^{17} + 11x^{16} + 14x^{15} + 4x^{14} \\ 
    \scriptstyle + 4x^{13} + 7x^{11} + x^{10} + 9x^9 + 8x^8 + 11x^7 + 12x^6 + 6x^4 +
    14x^3 + 2x + 6)y \\ 
    \scriptstyle + x^{50} + x^{47} + 7x^{46} + 8x^{45} + 13x^{44} + 6x^{43} + 3x^{42} + 4x^{41} + 14x^{39} + 3x^{38} + 4x^{37} + 7x^{36} \\ 
    \scriptstyle + 
    12x^{35} + 12x^{34} + 9x^{33} + 11x^{32} + x^{30} + 11x^{29} + 2x^{28} + 9x^{27} + 11x^{26} + 8x^{25} + 7x^{24} \\ 
    \scriptstyle + 16x^{23} + 11x^{22} + x^{21} + 
    8x^{20} + 10x^{18} + 15x^{17} + 14x^{16} + 16x^{15}  + 4x^{14}  + 6x^{13} \\ 
    \scriptstyle + 3x^{11} + 5x^{10} + 4x^9 + 9x^8 + 15x^7 + 7x^6 + 10x^5 + 5x^4
    + 5x^3 + 12x^2 + 15x + 7
\end{multline*}
of genus $9$, having Maroni invariants $\{1,1,1,2\}$ and Schreyer invariants $\{0,1,1,2,2\}$. The Bhargava correspondence then produced the quadruple of skew-symmetric matrices
\begin{multline*}
\left(  \begin{smallmatrix} 0 &  16x^2 + 15x + 9 &  2x + 10 &     14x + 3 &   13 \\
 x^2 + 2x + 8 &  0 &  3 &  6x + 15 & 0 \\
15x + 7 & 14 & 0 &  5 &  0 \\
  3x + 14 &  11x + 2 &  12 &  0 &  0 \\
 4 & 0 & 0 &  0 &  0 \\ \end{smallmatrix} \right),
\left( \begin{smallmatrix} 
0 & 16x + 14 & 6x + 10  & 3x + 6 &  10 \\
x + 3 & 0 & 7x + 16 & 10x & 9 \\
 11x + 7 & 10x + 1 &  0 &  1 & 0 \\
14x + 11 & 7x &  16 &  0 & 0 \\
 7 &    8 &  0 & 0 &  0 \\
\end{smallmatrix} \right),
\\
\left( \begin{smallmatrix}  0  & x^2 + 3 &  14x + 11 &  16x + 11 &     10 \\
16x^2 + 14 &  0 &  10x + 14  &  13x + 1 &    16 \\
  3x + 6 & 7x + 3 &  0  & 3 &  0 \\
  x + 6  &  4x + 16 &    14 &    0&  0 \\
  7 &  1  &  0  & 0 &  0 \\
\end{smallmatrix} \right),
\\
\left( \begin{smallmatrix}
  0  &  8x^3 + x^2 + 12x + 6    &    12x + 8 & 11x^2 + 16x + 15 &    11x + 16 \\
9x^3 + 16x^2 + 5x + 11 & 0  &   4x^2 + 1    &    7x^2 + 8x + 11 &    5x + 5 \\
  5x + 9    &   13x^2 + 16  & 0 & 11x + 6 &   1 \\
 6x^2 + x + 2  &    10x^2 + 9x + 6 &  6x + 11 &    0    &    10 \\
 6x + 1     &    12x + 12   & 16 & 7&        0 \\
\end{smallmatrix} \right).
\end{multline*}
These matrices were then lifted to characteristic zero, i.e., to matrices over $\ZZ[x]$ whose entries have coefficients in $\{-8, \ldots, 8\}$; note that this coefficient range forces the lifted matrices to be skew-symmetric. After taking the linear combination with coefficients $1, y = y_1, y_2, y_3$, computing the five $4 \times 4$ sub-Pfaffians of the resulting skew-symmetric matrix, eliminating the variables $y_2, y_3$ and making the outcome monic in $y$, we ended up with a polynomial of the form~\eqref{eq:liftedpol} which was fed as input to Tuitman's code.
The numerator of its Hasse--Weil zeta function was then determined to be
\begin{multline*}
\scriptstyle  118587876497T^{18}  
- 20927272323T^{17} + 4513725403T^{16} - 168962983T^{15} + 271192687T^{14} \\ \scriptstyle - 57044843T^{13} 
+ 12616584T^{12} - 3142008T^{11} + 924732T^{10} - 198240T^9 + 54396T^8 - 10872T^7 \\ \scriptstyle + 2568T^6 - 
    683T^5 + 191T^4 - 7T^3 + 11T^2 - 3T + 1.
\end{multline*}
This basic example took $7.5$ hours of computation; as mentioned in the introduction, this is due to coefficient growth during variable elimination. Nevertheless, it is feasible to reach non-trivial ranges. E.g., for a random pentagonal genus $7$ curve over $\FF_{211}$ having Maroni invariants $\{0,1,1,1\}$ and Schreyer invariants $\{0,0,0,1,1\}$ we obtained its Hasse--Weil zeta function using about 28 hours of computation. %Over $\FF_{11^3}$, the same choice of parameters required \textbf{to do} hours.
%As mentioned in the introduction, examples of cryptographic size (say $q^g \approx 2^{256}$) seem out of reach, for the time being.

\end{document}